\newtheorem{definition}{\noindent{\it Definition}}[section]
\newtheorem{theorem}{\noindent{\it Theorem}}[section]
\newtheorem{remark}[theorem]{\noindent{\it Remark}}
\newenvironment{proof}{\noindent{\it Proof:}}{$\hfill$ $\Box$\\ }
\begin{document}

\title{Asymptotically Good Convolutional Codes}

\author{Giuliano G. La Guardia
\thanks{Giuliano Gadioli La Guardia is with Department of Mathematics and Statistics,
State University of Ponta Grossa (UEPG), 84030-900, Ponta Grossa,
PR, Brazil. }}

\maketitle

\begin{abstract}
In this paper, we construct new sequences of asymptotically good
convolutional codes (AGCC's). These sequences are obtained from
sequences of transitive, self-orthogonal and self-dual algebraic
geometry (AG) codes attaining the Tsfasman-Vladut-Zink bound.
Furthermore, by applying the techniques of expanding, extending,
puncturing, direct sum, the $\langle u | u + v\rangle$ construction
and the product code construction to these block codes, we construct
more new sequences of asymptotically good convolutional codes.
Additionally, we show that the new constructions presented here also
hold when applied to al l sequences of asymptotically good block
codes where ${\lim}_{j \rightarrow \infty} k_{j}/n_{j}$ and
${\lim}_{j \rightarrow \infty} d_j /n_{j}$ exist.
\end{abstract}

\textbf{\emph{Index Terms}} -- \textbf{convolutional codes,
transitive codes, code construction}

\section{Introduction}\label{Intro}

In his seminal work \cite{Goppa:1972}, Goppa  introduced the
well-known class of algebraic geometry (AG) codes. It is well known
that the AG codes are asymptotically good; because of this fact,
coding theorists have utilized this structure in order to obtain
(maximal) curves over which more families of asymptotically good
codes can be constructed (see
\cite{Goppa:1972,Goppa:1981,Henning:1995,Henning:1996,Henning:2005}).
More information with respect to the class of AG codes can be found
in \cite{Henning:2009}.

Concerning the investigation and development of theory of
convolutional codes, much effort has been paid
\cite{Forney:1970,Piret:1988,Rosenthal:1999,Rosenthal:2001,Hutchinson:2005,Gluesing:2006,Schmale:2006,Iglesias:2009,Kuijper:2009,Napp:2017}.
More specifically, constructions of convolutional codes with good or
even optimal parameters (for instance, maximum-distance-separable
(MDS) codes, i.e., codes attaining the generalized Singleton bound
\cite{Rosenthal:1999}) are of great interest for several researchers
\cite{Rosenthal:1999,Rosenthal:2001,Hutchinson:2005,Gluesing:2006,LaGuardia:2014,LaGuardia:2014A,LaGuardia:2016}.

In this context, constructions of families of good or asymptotically
good convolutional codes (AGCC) are also of great importance in the
literature
\cite{Justesen:1973,Papadimitriou:2001,Sridharan:2007,Costello:2008,Costello:2010,Uchikawa:2011,Mitchell:2013,Liu:2014,Mitchell:2015,Bocharova:2016}.
In \cite{Justesen:1973}, Justesen constructed AGCC generated by
generator polynomials of cyclic codes. In \cite{Papadimitriou:2001},
the authors utilized a different approach to design good
convolutional codes. More precisely, the authors assume no
restrictions in the rate of the code, but only restrictions in the
constraint length. Using this approach, they found families of
AGCC's with applications in CDMA systems. In \cite{Sridharan:2007},
the authors introduced an ensemble of $(J; K)$-regular LDPC
convolutional codes and they presented lower bounds on the free distance
of such codes. After this, they showed that the ratio between the free distance and the
constraint length is several times larger than the ratio of minimum
distance for Gallager's ensemble of $(J;K)$-regular
LDPC block codes. In \cite{Costello:2008},
the authors computed a lower bound on the free distance for several
ensembles of asymptotically good protograph-based low-density
parity-check (LDPC) convolutional codes. They utilized ensembles of
LDPC convolutional codes (which were introduced by Felstrom and
Zigangirov in \cite{Felstrom:1999}) derived from a protograph-based
ensemble of LDPC block codes to obtain asymptotically good,
periodically time-varying LDPC convolutional code ensembles, having
the property that the free distance grows linearly with constraint
length. In \cite{Costello:2010}, the authors performed an iterative
decoding threshold analysis of LDPC block code ensembles formed by
certain classes of LDPC convolutional codes. These ensembles were
shown to have minimum distance growing linearly with block length
and their thresholds approach the Shannon limit as the termination
factor tends to infinity. In \cite{Uchikawa:2011}, Uchikawa \emph{et
al.} generalized the results shown in \cite{Felstrom:1999} to
non-binary LDPC convolutional codes. They also investigated its
rate-compatibility. In particular, they modified the construction
method proposed in \cite{Felstrom:1999}, to construct a non-binary
$(2,4)$-regular LDPC convolutional code. Applying numerical
simulations they showed that non-binary rate $1/2$ LDPC
convolutional codes binary LDPC convolutional codes with comparable
constraint bit length. Mitchell \emph{et al.} \cite{Mitchell:2013}
showed that several ensembles of regular and irregular LDPC convolutional
codes derived from protograph-based LDPC block codes
are asymptotically good. Furthermore, they investigated
the trapping set (see the definition in \cite{MacKay:2003})
of such class of codes. Mu \emph{et al.} \cite{Liu:2014} constructed
time-varying convolutional low-density parity check (LDPC) codes
derived from block LDPC codes based on improved progressive edge
growth (PEG) method (see \cite{Chen:2005}). Different from the
conventional PEG algorithm, the parity-check matrix is initialized
by inserting certain patterns. Applying simulation results the
authors showed that the new convolutional codes perform well over
additive white Gaussian noise(AWGN) channels. Mitchell \emph{ et al.}
\cite{Mitchell:2015} investigated relationship
between the minimum distance growth rate of the
spatially coupled LDPC block codes (SC-LDPC-BC) ensemble and the free distance growth rate of
the associated spatially coupled LDPC convolutional codes (SC-LDPC-CC) ensemble.
They showed that the minimum distance growth rates converge to a bound on the
free distance growth rate of the unterminated SC-LDPC-CC ensemble. Bocharova \emph{et
al.} \cite{Bocharova:2016} proposed an interesting search for good
convolutional LDPC codes over binary as well as non-binary alphabets
by means of three algorithms. They presented examples of codes with
bi-diagonal structure of the corresponding parity-check matrix,
which preserves low encoding complexity.

In great part of the works mentioned above, the authors considered
time-varying convolutional codes, whereas, in our case, we construct
convolutional codes whose (reduced basic) generator matrices do not
depend on the time.

In \cite{Rosenthal:1999}, the authors introduced the generalized Singleton bound: if $C$ is an
$(n, k, \gamma ; m, d_{f} {)}_{q}$ convolutional code then
$d_{f}\leq (n-k)[ \lfloor \gamma/k \rfloor +1 ] + \gamma +1$, where
$n$ denotes the length, $k$ is the dimension, $\gamma$ is the degree
of the code, $m = {\max}_{1\leq i\leq k}\{{\gamma}_i\}$ is the
\emph{memory} and $d_{f}=$wt$(V)=\min \{wt({\bf v}(D)) \mid {\bf
v}(D) \in V, {\bf v}(D)\neq 0 \}$ denotes the \emph{free distance}
of the code. This upper bound clearly generalizes the Singleton
bound ($\gamma = 0$ for block codes).

Based on the generalized Singleton bound, given a sequence
${(V_{j})}_{j\geq 0}$ of convolutional codes with parameters $(n_j,
k_j, {\gamma}_j ; m_j, {(d_f)}_{j} {)}_{q}$, we present the first
contribution of this paper, that is, we introduce the quantities
$s_j := (n_j -k_j)[ \lfloor {\gamma}_j /k_j \rfloor +1 ] +
{\gamma}_j +1$ and $r_j := \max \{n_j , s_j \}$. These two
quantities allow us to define more naturally the concept of AGCC in
the following sense (see Definition~\ref{defmain1}). Considering the
sequence ${(V_{j})}_{j\geq 0}$ of convolutional codes given above,
if ${\limsup}_{j \rightarrow \infty}$ $k_{j}/n_{j} > 0$ and if
${\limsup}_{j \rightarrow \infty} {(d_f)}_{j}/r_j > 0$ hold, where
$s_j := (n_j -k_j)[ \lfloor {\gamma}_j /k_j \rfloor +1 ] +
{\gamma}_j +1$ and $r_j := \max \{n_j , s_j \}$, then one says that
the sequence is asymptotically good. On the other hand one says that
a sequence ${(C_{j})}_{j\geq 0}$ of linear block codes over ${
\mathbb F}_{q}$ with parameters ${[n_j , k_j , d_j ]}_{q}$ is
asymptotically good if $n_j \rightarrow \infty$ as $j \rightarrow
\infty$ and if ${\limsup}_{j \rightarrow \infty} k_{j}/n_{j}> 0 $
and ${\limsup}_{j \rightarrow \infty} d_{j}/n_{j} > 0$ are true.
Thus, these two definitions are analogous in the sense that they
consider a comparison between length and dimension as well as they
provide a comparison between the free distance and the quantity
$r_j$, which is the maximum between $n_j$ and the Singleton bound
$s_j$ (note that the consideration of $s_j$ in the case of
convolutional codes is natural, because $s_j$ could be greater than
$n_j$).

As the second contribution, starting from transitive AG codes in
\cite{Henning:2005}, we construct new families of convolutional
codes having reduced basic generator matrices by utilizing Piret's
technique \cite{Piret:1972}. After this, we show that these new
families of convolutional codes are asymptotically good. An
advantage of our method is that it is performed algebraically. More
precisely, given a family of asymptotically good block codes with
given parameters (in particular, the transitive codes shown in this
paper), our technique allows us to construct the corresponding
family of asymptotically good convolutional codes and also to
compute the exact parameters of these codes (except the free
distance, where a lower bound is given). Additionally, we do not
utilize algorithms nor computational search to this end.

The third contribution of this work is based on the second one:
starting from techniques of code expansion, extension, direct sum,
the $\langle u|u+v\rangle$ construction, puncturing and direct
product construction applied to such AG codes, we carefully
construct sequences of convolutional codes with reduced basic
generator matrices, after showing that such families are good
asymptotically. Moreover, we show that our proposed constructions
also hold when applied to all sequences of asymptotically good block
codes where ${\lim}_{j \rightarrow \infty} k_{j}/n_{j}$ and
${\lim}_{j \rightarrow \infty} d_j /n_{j}$ exist.

Another well-known class of asymptotically good codes is the class
of random binary and non-binary convolutional codes with
low-density parity-check matrices (see \cite{Uchikawa:2011,Liu:2014}).
Since such class of convolutional codes is asymptotically good,
it is interesting to know how to relate the free distances
of these codes with the ones presented in this paper.
However, because random convolutional LDPC codes
are often constructed by means of algorithms
or they are found by computational search, and the new
asymptotically good convolutional codes
presented here are constructed algebraically, without using algorithms or even
without using computational search, it is difficult to compare their free distances
for fixed block lengths. In other words, due to the big difference between both
construction methods we cannot perform a comparison among such codes.

The paper is organized as follows. In Section~\ref{II}, we review
the concepts on convolutional codes. In Section~\ref{III}, we
establish the contributions of this work, i.e., constructions of new
sequences of asymptotically good convolutional codes. Finally, in
Section~\ref{IV}, the final considerations are drawn.

\section{Review of Convolutional Codes}\label{II}

In this section we present a brief review of convolutional codes.
For more details we refer the reader to
\cite{Forney:1970,Johannesson:1999,Huffman:2003}.

Throughout this paper, $q$ is a prime power and ${ \mathbb F}_{q}$
is the finite field with $q$ elements. Recall that a polynomial
encoder matrix $G(D) \in { \mathbb F}_{q}{[D]}^{k \times n}$ is
called \emph{basic} if $G(D)$ has a polynomial right inverse. A
basic generator matrix is called \emph{reduced} (or minimal
\cite{Rosenthal:2001,Huffman:2003}) if the overall constraint length
$\gamma =\displaystyle\sum_{i=1}^{k} {\gamma}_i$, where ${\gamma}_i
= {\max}_{1\leq j \leq n} \{ \deg g_{ij} \}$, has the smallest value
among all basic generator matrices (in this case the overall
constraint length $\gamma$ will be called the \emph{degree} of the
resulting code).

\begin{definition}\cite{Johannesson:1999}
A rate $k/n$ \emph{convolutional code} $V$ with parameters $(n, k,
\gamma ;$ $m, d_{f} {)}_{q}$ is a submodule of ${ \mathbb F}_{q}
{[D]}^{n}$ generated by a reduced basic matrix $G(D)=(g_{ij}) \in {
\mathbb F}_q {[D]}^{k \times n}$, i.e. $V = \{ {\bf u}(D)G(D) | {\bf
u}(D)\in { \mathbb F}_{q} {[D]}^{k} \}$, where $n$ is the length,
$k$ is the dimension, $\gamma$ is the degree, $m = {\max}_{1\leq
i\leq k}\{{\gamma}_i\}$ is the \emph{memory} and $d_{f}=$wt$(V)=\min
\{wt({\bf v}(D)) \mid {\bf v}(D) \in V, {\bf v}(D)\neq 0 \}$ is the
\emph{free distance} of the code.
\end{definition}

In the above definition, the \emph{weight} of an element ${\bf
v}(D)\in { \mathbb F}_{q} {[D]}^{n}$ is defined as wt$({\bf
v}(D))=\displaystyle\sum_{i=1}^{n}$wt$(v_i(D))$, where wt$(v_i(D))$
is the number of nonzero coefficients of $v_{i}(D)$. Let us consider
the field of Laurent series ${ \mathbb F}_{q}((D))$, whose elements
are given by ${\bf u}(D) = {\sum}_{i} u_i D^{i}$, where $u_i \in {
\mathbb F}_{q}$ and $u_i = 0$ for $i\leq r $, for some $r \in
\mathbb{Z}$. The weight of ${\bf u}(D)$ is defined as wt$({\bf
u}(D)) = {\sum}_{\mathbb{Z}}$wt$(u_i)$. A generator matrix $G(D)$ is
called \emph{catastrophic} if there exists a ${\bf u}{(D)}^{k}\in {
\mathbb F}_{q}{((D))}^{k}$ of infinite Hamming weight such that
${\bf u}{(D)}^{k}G(D)$ has finite Hamming weight. Since a basic
generator matrix is non-catastrophic, all the convolutional codes
constructed in this paper have non-catastrophic generator matrices.

We next recall how to construct a convolutional code derived from a
block code. This technique was presented first by
Piret~\cite{Piret:1972,Piret:1988} to binary codes, after
generalized by Aly \emph{et al.}~\cite{Aly:2007} to nonbinary
alphabets.

Let $C$ be an ${[n, k, d]}_{q}$ linear block code with parity check
matrix $H$. We split $H$ into $m+1$ disjoint submatrices $H_i$ such
that
\begin{eqnarray}
H = \left[
\begin{array}{c}
H_0\\
H_1\\
\vdots\\
H_{m}\\
\end{array}
\right],
\end{eqnarray}
where each $H_i$ has $n$ columns, obtaining the polynomial matrix
\begin{eqnarray}
G(D) =  {\tilde H}_0 + {\tilde H}_1 D + {\tilde H}_2 D^2 + \ldots +
{\tilde H}_{m} D^{m},
\end{eqnarray}
where the matrices ${\tilde H}_i$, for all $1\leq i\leq m$, are
derived from the respective matrices $H_i$ by adding zero-rows at
the bottom in such a way that the matrix ${\tilde H}_i$ has $\kappa$
rows in total, where $\kappa$ is the maximal number of rows among
the matrices $H_i$. The matrix $G(D)$ generates a convolutional code
with $\kappa$ rows and memory $m$. In this context, one has the
following result:

\begin{theorem}\cite[Theorem 3]{Aly:2007}\label{A}
Suppose that $C \subseteq { \mathbb F}_q^n$ is a linear code with
parameters ${[n, k, d]}_{q}$ and assume also that $H \in { \mathbb
F}_q^{(n-k)\times n}$ is a parity check matrix for $C$ partitioned
into submatrices $H_0, H_1, \ldots, H_m$ as in Eq.~(1) such that
$\kappa = \operatorname{rk}H_0$ and $\operatorname{rk}H_i \leq
\kappa$ for $1 \leq i\leq m$, where $\operatorname{rk}H_i$ denotes
the row rank of the matrix $H_i$. Consider the polynomial matrix
$G(D)$ as in Eq.~(2). Then the matrix $G(D)$ is a reduced basic
generator matrix of a convolutional code $V$. Additionally, if $d_f$
denotes the free distances of $V$ and $d^{\perp}$ is the minimum
distance of $C^{\perp}$, then $d_f \geq d^{\perp}$.
\end{theorem}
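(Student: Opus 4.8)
The plan is to verify three separate claims contained in the statement: that $G(D)$ is \emph{basic}, that it is \emph{reduced}, and that $d_f \ge d^{\perp}$. The single structural fact I would lean on throughout is the linear independence of the rows of $H$: since $H$ is a parity check matrix of an ${[n,k]}_q$ code it has full row rank $n-k$, and its rows are distributed \emph{without repetition} among the disjoint blocks $H_0,\dots,H_m$, so the entire collection of rows is linearly independent over ${\mathbb F}_q$ and hence over $\overline{\mathbb F}_q$. I write $\rho_i$ for the number of rows of $H_i$, so that $\tilde H_i=\bigl[\begin{smallmatrix}H_i\\ 0\end{smallmatrix}\bigr]$ has its bottom $\kappa-\rho_i$ rows equal to zero, and recall that $\kappa=\operatorname{rk}H_0$ together with $\kappa$ being the maximal row number means precisely $\rho_0=\kappa$ and $H_0$ of full row rank.

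For the basic property I would invoke the standard characterization that a $\kappa\times n$ polynomial matrix is basic (equivalently right prime, equivalently admits a polynomial right inverse) if and only if the gcd of its $\kappa\times\kappa$ minors is a unit, which holds exactly when $\operatorname{rk}G(\lambda)=\kappa$ for every $\lambda\in\overline{\mathbb F}_q$. So I fix $\lambda$ and a hypothetical nonzero left annihilator $v\in\overline{\mathbb F}_q^{\kappa}$ with $v^{\top}G(\lambda)=0$, and expand $v^{\top}G(\lambda)=\sum_i \lambda^i\,v^{\top}\tilde H_i$. Because of the zero padding, $v^{\top}\tilde H_i$ only involves the top $\rho_i$ entries of $v$, so the whole expression is a linear combination of the rows of $H$ in which the coefficient of the $r$-th row of block $H_i$ is $v^{(r)}\lambda^i$. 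Independence of the rows of $H$ forces every such coefficient to vanish; the decisive block is $H_0$, which has $\rho_0=\kappa$ rows, each appearing only in block $0$, so for $i=0$ all $\kappa$ coefficients are $v^{(r)}\cdot 1=v^{(r)}$, whence $v=0$. This rules out any rank drop and proves $G(D)$ basic. I expect this to be the main obstacle: a full-rank constant term $G(0)=H_0$ alone does \emph{not} imply basicness (e.g.\ a scalar $D^2+1$), so one must genuinely exploit both the global independence of the contributing rows and the fact that $H_0$ supplies a full set of $\kappa$ rows at the zeroth power of $D$.

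Next I would establish that $G(D)$ is reduced, using that a basic generator matrix is minimal exactly when its high-order coefficient matrix $[G]_h$ has full row rank $\kappa$. Row $j$ of $G(D)$ has degree $\gamma_j=\max\{i:\rho_i\ge j\}$, and the associated row of $[G]_h$ is the $j$-th row of $H_{\gamma_j}$, i.e.\ once again a genuine row of $H$. For distinct indices $j$ these are distinct rows of $H$ (either different block index, or the same block but different within-block position), so they form a subfamily of the independent rows of $H$; hence $[G]_h$ has rank $\kappa$ and, combined with the previous step, $G(D)$ is reduced basic.

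Finally, for the free-distance bound I would take any nonzero codeword $v(D)=u(D)G(D)$ and normalize $u(D)$ so that its lowest-order coefficient $u_{t_0}$ is nonzero. The lowest-order coefficient of $v(D)$ is then $u_{t_0}\tilde H_0=u_{t_0}H_0$, which is nonzero because $H_0$ has full row rank, and which lies in $C^{\perp}$ since it is a combination of rows of $H$ (a generator matrix of $C^{\perp}$). Therefore $\operatorname{wt}(v(D))\ge\operatorname{wt}(u_{t_0}H_0)\ge d^{\perp}$, giving $d_f\ge d^{\perp}$. This part and the reduced part are comparatively routine once the basic property is in place, so the real care is concentrated in the argument of the second paragraph.
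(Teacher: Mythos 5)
The paper does not prove Theorem~\ref{A} at all: it is imported verbatim from Aly \emph{et al.}~\cite{Aly:2007}, so there is no in-paper argument to compare against. Your self-contained proof is correct, and it follows essentially the standard route of that source: you reduce basicness to $\operatorname{rk}G(\lambda)=\kappa$ for every $\lambda\in\overline{\mathbb F}_q$ and settle it by the linear independence of the rows of $H$ (with $\rho_0=\kappa$ so that the $i=0$ block alone forces $v=0$), you get reducedness from the full row rank of the high-order coefficient matrix (whose rows are again distinct rows of $H$), and you obtain $d_f\ge d^{\perp}$ by looking at the lowest-order coefficient $u_{t_0}H_0$, which is a nonzero word of $C^{\perp}$. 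Your side remark that full rank of $G(0)$ alone would not imply basicness is also well taken; the argument genuinely needs the global independence of all contributing rows of $H$.
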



\section{Asymptotically Good Convolutional Codes}\label{III}

In this section we present the contributions of this paper. More
precisely, we construct new sequences of asymptotically good
convolutional codes.

First, we recall some basic concepts necessary for the development
of this paper. A subgroup $H$ of the symmetric group $S_n$ is called
transitive if for any pair $(i, j)$ with $i, j \in \{1, \ldots ,
n\}$, there exists a permutation $\rho \in H$ such that $\rho (i) =
j$. A permutation $\rho \in S_n$ is called an automorphism of the
code $C \subseteq {\mathbb F}_{n}^{q}$ if $(c_1, \ldots , c_n) \in C
\Rightarrow (c_{\rho(1)}, \ldots , c_{\rho (n)}) \in C$ holds for
all codewords $(c_1, \ldots , c_n) \in C$. The group
$\operatorname{Aut}(C)\subseteq S_n$ is the group of all
automorphisms of $C$. A code $C$ over ${ \mathbb F}_{q}$ of length
$n$ is called transitive if its automorphism group
$\operatorname{Aut}(C)$ is a transitive group of $S_n$. An example
of transitive codes are the well-known cyclic codes.

Recall that the generalized Singleton bound \cite{Rosenthal:2001} of
an $(n, k, \gamma ; m, d_{f} {)}_{q}$ convolutional code is given by
$ d_{f}\leq (n-k)[ \lfloor \gamma/k \rfloor +1 ] + \gamma +1$. We
put $s := (n-k)[ \lfloor \gamma/k \rfloor +1 ] + \gamma +1$. Based
on this upper bound for $d_{f}$, we can introduce more precisely the
concept of asymptotically good convolutional codes.

\begin{definition}\label{defmain1}
A sequence ${(V_{j})}_{j\geq 0}$ of convolutional codes with
parameters $(n_j, k_j,$ ${\gamma}_j ; m_j, {(d_f)}_{j} {)}_{q}$ is
said to be asymptotically good if ${\limsup}_{j \rightarrow \infty}$
$k_{j}/n_{j} > 0$ and ${\limsup}_{j \rightarrow \infty}
{(d_f)}_{j}/r_{j} > 0$ hold, where $s_j := (n_j -k_j)[ \lfloor
{\gamma}_j /k_j \rfloor +1 ] + {\gamma}_j +1$ and $r_j := \max \{n_j
, s_j \}$.
\end{definition}

\begin{remark}
Note that because both sequences of real numbers
$(k_{j}/n_{j})_{j\geq 0}$ and $({(d_f)}_{j}/r_j)_{j\geq 0}$ are
bounded, then there exist both ${\limsup}_{j \rightarrow \infty}
k_{j}/n_{j}$ and ${\limsup}_{j \rightarrow \infty}$
${(d_f)}_{j}/r_j$, and the definition makes sense.
\end{remark}

Let us recall the construction of a sequence of asymptotically good
transitive codes:

\begin{theorem}\cite[Theorem 1.5]{Henning:2005}\label{henni}
Let $q = l^2$ be a square. Then the class of transitive codes meets
the Tsfasman-Vladut-Zink bound. More precisely, let $R, \delta \geq
0$ be real numbers with $R = 1 - \delta -1/(l-1)$. Then there exists
a sequence ${(C_{j})}_{j\geq 0}$ of linear codes $C_j$ over ${
\mathbb F}_{q}$ with parameters ${[n_j , k_j , d_j ]}_{q}$ with the
following properties:\\
a) All $C_j$ are transitive codes.\\
b) $n_j \rightarrow \infty$ as $j \rightarrow \infty$.\\
c) ${\lim}_{j \rightarrow \infty} k_{j}/n_{j}\geq R $ and ${\lim}_{j
\rightarrow \infty} d_{j}/n_{j}\geq \delta $.
\end{theorem}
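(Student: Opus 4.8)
The plan is to realize the required sequence as algebraic geometry (Goppa) codes attached to a tower of function fields that is simultaneously optimal in the sense of Drinfeld--Vladut and rich in automorphisms. First I would fix a tower $\mathcal{F} = (F_0 \subseteq F_1 \subseteq \cdots)$ over $\mathbb{F}_q$, $q = l^2$, with genera $g_j := g(F_j) \to \infty$ and numbers of rational places $N_j := N(F_j)$ satisfying $\lim_{j \to \infty} N_j/g_j = l - 1$; the existence of such a tower is classical (e.g.\ the optimal Garcia--Stichtenoth tower). For each $j$ I would pick a divisor $G_j$ of degree $a_j$ and an evaluation divisor $D_j = P_1 + \cdots + P_{n_j}$ supported on rational places disjoint from $\operatorname{supp}(G_j)$, with $2g_j - 2 < a_j < n_j$, and set $C_j := C_L(D_j, G_j)$. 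Riemann--Roch then yields $k_j = a_j - g_j + 1$ and $d_j \geq n_j - a_j$, so that $k_j/n_j + d_j/n_j \geq 1 - (g_j - 1)/n_j$.

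The quantitative half of the statement, parts (b) and (c), follows by taking $n_j$ to be (essentially all of) the rational places, so that $n_j/g_j \to l - 1$ and hence $(g_j - 1)/n_j \to 1/(l-1)$, while tuning $a_j$ so that $a_j/n_j \to 1 - \delta$. With this choice $\lim_j d_j/n_j \geq \delta$ and $\lim_j k_j/n_j \geq (1 - \delta) - 1/(l-1) = R$, which is exactly the prescribed point on the rate--distance line $R = 1 - \delta - 1/(l-1)$; and $n_j \to \infty$ is immediate since $N_j \to \infty$.

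The genuinely new content, and the step I expect to be the main obstacle, is transitivity (part (a)). My plan is to arrange a group $\Gamma_j \leq \operatorname{Aut}(F_j/\mathbb{F}_q)$ that acts transitively on $\{P_1, \ldots, P_{n_j}\}$ while fixing the divisor $G_j$; any $\sigma \in \Gamma_j$ then induces a coordinate permutation preserving $C_L(D_j, G_j)$, and transitivity of the action on the $P_i$ forces $\operatorname{Aut}(C_j) \leq S_{n_j}$ to contain a transitive subgroup, i.e.\ $C_j$ is transitive. Concretely I would exploit the Galois structure of the tower: take $\{P_1, \ldots, P_{n_j}\}$ to be a single orbit of a Galois group acting on the places lying over a completely split place of $F_0$, and take $G_j$ to be a $\Gamma_j$-invariant divisor (for example a multiple of the fixed ramification or pole locus) of the required degree $a_j$. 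The delicate point is reconciling transitivity with optimality: the group must act transitively on an orbit of rational places whose size is still comparable to $N_j$, so that restricting $D_j$ to one orbit does not spoil the ratio $n_j/g_j \to l - 1$, and the invariant divisors $G_j$ must be available in every needed degree with support disjoint from $D_j$. Establishing that the chosen optimal tower does admit such transitive Galois actions on an asymptotically full set of rational places is where the real work lies; granting this, items (a)--(c) follow by assembling the estimates above.
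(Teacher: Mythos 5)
This statement is imported by the paper as a black box --- it is \cite[Theorem 1.5]{Henning:2005} (Stichtenoth's transitive-codes theorem) and the paper gives no proof of it --- so the only meaningful comparison is against the actual source. Your skeleton is the right one and matches Stichtenoth's architecture: evaluation codes $C_L(D_j,G_j)$ on an asymptotically optimal tower, Riemann--Roch for $k_j$ and the Goppa bound for $d_j$, and a group of automorphisms of the function field fixing $G_j$ and permuting the places in $D_j$ transitively. Your parameter bookkeeping in the second paragraph is also correct (modulo the cosmetic point that you obtain $\liminf$ bounds and should pass to a subsequence, or argue convergence, to literally get the limits asserted in (c)).

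However, there is a genuine gap, and it is exactly the step you flag and then assume: no known asymptotically optimal tower comes equipped with the transitive action you need. The Garcia--Stichtenoth tower $E_0\subseteq E_1\subseteq\cdots$ you invoke has each step $E_{i+1}/E_i$ Galois (Artin--Schreier of degree $l$), but $E_j/E_0$ is \emph{not} Galois, so there is no group $\Gamma_j$ acting transitively on the rational places lying over a completely splitting place of $E_0$. The entire content of Stichtenoth's theorem is the repair of this: one replaces $E_j$ by the Galois closure $F_j$ of $E_j/E_0$ and then must prove that the Galois closure is \emph{still} asymptotically optimal, i.e.\ that $N(F_j)/g(F_j)\to l-1$. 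That requires a nontrivial ramification analysis --- uniform bounds on the different exponents in the tower combined with Abhyankar's lemma and the transitivity of the different, showing that the genus of $F_j$ grows no faster (asymptotically) than the degree $[F_j:E_0]$ times the normalized genus of $E_j$, while the completely splitting places of $E_0$ remain completely splitting in $F_j$ and form a single Galois orbit of size $[F_j:E_0]$. Without this argument, ``granting this, items (a)--(c) follow'' is assuming precisely the theorem to be proved. The construction of $\Gamma_j$-invariant divisors $G_j$ of every needed degree (multiples of the invariant ramification/pole locus) is, by contrast, routine once the Galois closure is under control. So the proposal is a correct plan with the central lemma missing, not a proof.
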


The codes shown in Theorem~\ref{henni} are AG codes constructed by
applying an asymptotically good tower of function fields over
${\mathbb F}_{q}$ (see \cite{Henning:2005}). For more details with
respect to AG codes, see \cite{Henning:2009}.

In this paper we only construct sequences of unit-memory
convolutional codes. Moreover, we construct reduced basic generator
matrices for the new asymptotically good sequences of convolutional
codes, i.e., the codes are non-catastrophic.

\begin{theorem}\label{main1}
Let $q = l^2$ be a prime power, where $l \geq 3$ is an integer. Then
there exists a sequence of asymptotically good convolutional codes,
over ${\mathbb F}_{q}$, derived from transitive codes.
\end{theorem}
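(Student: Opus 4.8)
The plan is to feed the transitive codes of Theorem~\ref{henni} into the Piret--Aly construction of Theorem~\ref{A}, choosing the free parameters so that both ratios in Definition~\ref{defmain1} stay bounded away from zero. First I fix the rate and relative distance: since $R = 1 - \delta - 1/(l-1)$ and $l \ge 3$ forces $1/(l-1) \le 1/2 < 1$, I may pick real numbers $R, \delta > 0$ with $R + \delta = 1 - 1/(l-1)$, for example $R = \delta = \tfrac12\bigl(1 - 1/(l-1)\bigr)$. Theorem~\ref{henni} then furnishes a sequence ${(C_j)}_{j\ge0}$ of transitive ${[n_j,k_j,d_j]}_q$ codes with $n_j \to \infty$, $\lim_j k_j/n_j \ge R > 0$ and $\lim_j d_j/n_j \ge \delta > 0$.

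The crucial observation is that Theorem~\ref{A} controls the free distance of the output only through the minimum distance $d^{\perp}$ of the \emph{dual} of the block code that is fed in. I therefore apply the construction not to $C_j$ itself but to its dual $B_j := C_j^{\perp}$. This code is again transitive, since coordinate permutations preserve the standard inner product and hence $\operatorname{Aut}(B_j) = \operatorname{Aut}(C_j)$, so the resulting convolutional codes are still derived from transitive codes. A parity check matrix $H$ of $B_j$ is precisely a full-rank generator matrix of $B_j^{\perp} = C_j$ and thus has $k_j$ linearly independent rows. To obtain a unit-memory code I partition $H$ as in Eq.~(1) with $m = 1$, placing $\kappa_j := \lceil k_j/2 \rceil$ rows in $H_0$ and the remaining $\lfloor k_j/2 \rfloor$ rows in $H_1$. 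Because the rows of $H$ are independent, $\operatorname{rk} H_0 = \kappa_j$ and $\operatorname{rk} H_1 = \lfloor k_j/2 \rfloor \le \kappa_j$, so the hypotheses of Theorem~\ref{A} are met and $G(D) = {\tilde H}_0 + {\tilde H}_1 D$ is a reduced basic generator matrix of a convolutional code $V_j$ satisfying $d_f \ge d^{\perp} = d(B_j^{\perp}) = d(C_j) = d_j$.

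It remains to read off the parameters and verify Definition~\ref{defmain1}. The code $V_j$ has length $n_j$, dimension $\kappa_j = \lceil k_j/2 \rceil$, memory $m = 1$, and degree $\gamma_j \le \kappa_j$, since each nonzero row of ${\tilde H}_1$ contributes row degree at most $1$ and there are at most $\kappa_j$ of them. For the rate, $\liminf_j \kappa_j/n_j \ge \tfrac12 \lim_j k_j/n_j \ge R/2 > 0$, giving the first condition. For the distance ratio, $\gamma_j \le \kappa_j$ yields $\lfloor \gamma_j/\kappa_j \rfloor \in \{0,1\}$, so $s_j = (n_j - \kappa_j)[\lfloor \gamma_j/\kappa_j\rfloor + 1] + \gamma_j + 1 \le 2n_j + 1$ and hence $r_j = \max\{n_j, s_j\} \le 2n_j + 1$. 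Combined with $d_f \ge d_j$ this gives ${(d_f)}_j/r_j \ge d_j/(2n_j + 1)$, whose $\liminf$ is at least $\delta/2 > 0$ by the choice of $\delta$. Both limsup conditions of Definition~\ref{defmain1} therefore hold and ${(V_j)}_{j\ge0}$ is asymptotically good.

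The one genuinely delicate point is the simultaneous positivity of $R$ and $\delta$: the free-distance guarantee requires $\delta > 0$ while the rate guarantee requires $R > 0$, and the Tsfasman--Vladut--Zink relation $R + \delta = 1 - 1/(l-1)$ leaves room for both exactly when $1/(l-1) < 1$, that is $l \ge 3$. In particular the self-dual specialization $R = 1/2$ would force $\delta = 1/2 - 1/(l-1)$, which vanishes at $l = 3$; this is precisely why I route the construction through the dual family $B_j = C_j^{\perp}$ rather than through self-dual codes. The remaining ingredients---the rank conditions on $H_0, H_1$, the bound $\gamma_j \le \kappa_j$, and the uniform estimate $r_j \le 2n_j + 1$---are routine.
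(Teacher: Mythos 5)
Your proposal is correct and follows the same strategy as the paper's proof: both feed the dual $C_j^{\perp}$ of the transitive code into the Piret--Aly construction (Theorem~\ref{A}) so that the free distance is bounded below by $d(C_j)=d_j$, and both then verify the two ratio conditions of Definition~\ref{defmain1}. The only substantive difference is the partition of the $k_j\times n_j$ parity check matrix: the paper places $k_j-1$ rows in $H_0$ and a single row in $H_1$, producing unit-memory, unit-degree codes with parameters $(n_j,k_j-1,1;1,(d_f)_j)_q$, for which $s_j=n_j-k_j+2\le n_j$, hence $r_j=n_j$ exactly and the rate and distance ratios tend to at least $R$ and $\delta$ respectively; you split the rows roughly in half, obtaining dimension $\lceil k_j/2\rceil$ and degree $\lfloor k_j/2\rfloor$, which costs a factor of $2$ in both ratios (rate $\to R/2$, distance ratio $\ge\delta/2$ via $r_j\le 2n_j+1$) but still meets the definition. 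Your version thus proves the same existence statement with slightly weaker constants, while the paper's choice yields the sharper parameters; neither difference affects correctness. Two small points you leave implicit and should state: you need $k_j\ge 2$ so that $H_1$ is nonempty and the code genuinely has memory $1$ (the paper devotes a paragraph to extracting a subsequence with $k_j>1$; discarding finitely many initial terms suffices since $k_j/n_j\to R>0$ and $n_j\to\infty$), and the limits you invoke are really lower bounds on $\liminf$, which of course imply the required positivity of $\limsup$.
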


\begin{proof}
We adopt the same notation of Theorem~\ref{henni}. Let $R > 0$ and
$\delta > 0$ be real numbers with $R = 1 - \delta -1/(l-1)$. Since
${\lim}_{j \rightarrow \infty} 1/ n_{j} = 0$ and ${\lim}_{j
\rightarrow \infty} k_{j}/n_{j}\geq R
> 0 $, then it follows that the sequence ${(k_{j})}_{j \geq 0}$ of positive
integers is not bounded. Taking $c=1$, there exists a nonnegative
integer $j_1$ such that $k_{j_1} > 1$. Analogously, there exists a
positive integer $j_2 > j_1$ such that $k_{j_2} > k_{j_1}$,
otherwise ${(k_{j})}_{j \geq 0}$ would be bounded. Assume by
induction that we have defined the subsequence for $n$ numbers $j_1
< j_2 < \ldots < j_n$, i.e., $1 < k_{j_1} < k_{j_2} < \ldots <
k_{j_n} $, where $j_1 < j_2 < \ldots < j_n$. Then there exists a
positive integer $j_{n+1} > j_{n}$ such that $k_{j_{n+1}} >
k_{j_n}$, otherwise ${(k_{j})}_{j \geq 0}$ would be bounded. Thus we
extract a subsequence ${(k_{j_{t}})}_{t \geq 0}$ of ${(k_{j})}_{j
\geq 0}$ with $k_{j_{t}}
> 1$ for each $t \geq 0$ and $k_{j_{t}} \rightarrow \infty$ as
$t \rightarrow \infty$. Moreover, it is clear that $n_{j_{t}}
\rightarrow \infty$ as $t \rightarrow \infty$ (alternatively,
because ${\lim}_{j \rightarrow \infty} k_{j}/n_{j} > 0$, and since
$n_j \rightarrow \infty$ when $j \rightarrow \infty$, there exists a
positive integer $n_0$ such that, $\forall j > n_0$, one has $k_j
> 1$).

We first consider the (Euclidean) dual $C_{j_{t}}^{\perp}$ of the
transitive code $C_{j_{t}}$ constructed in Theorem~\ref{henni}, for
every $t \geq 0$. To simplify the notation we put $C_{j_{t}}:=
C_{t}$, for all $t \geq 0$. A parity check matrix $G_{t}$ of
$C_{t}^{\perp}$ is a generator matrix of $C_t$, for all $t \geq 0$.
We construct a sequence of convolutional codes $V_t$, for all $t
\geq 0$, generated the by reduced basic matrix
$${ \mathbb G}_{t}(D) = G_{t}^{*} + {\tilde L}_{t} D,$$ where
$G_{t}^{*}$ is the submatrix of $G_{t}$ consisting of the first
$k_{t} - 1$ rows of $G_{t}$ and ${\tilde L}_{t}$ is the matrix
consisting of the last row of $G_{t}$ together $k_t - 2$ zero-rows
at the bottom. The code $V_t$ is a unit-memory code with parameters
$(n_{t}, k_{t}-1, 1 ; 1, {(d_{f})}_{t} {)}_{q}$, $t \geq 0$. From
Theorem~\ref{A}, one has ${(d_f)}_{t} \geq d_{t}$. It is obvious
that ${\lim}_{t \rightarrow \infty} (k_{t}-1)/n_{t} \geq R > 0 $. On
the other hand, since $s_t = n_t - k_t +2$ and because $k_t > 1$ for
all $t \geq 0$, then $r_t = n_t$; thus it follows that ${\lim}_{t
\rightarrow \infty} {(d_f)}_{t}/r_{t} > 0 $. Therefore, we have
constructed an asymptotically good sequence ${(V_{t})}_{t\geq 0}$ of
convolutional codes, as desired.
\end{proof}

\begin{theorem}\label{main1A}
Let $q = l^2$ be a prime power, where $l \geq 3$ is an integer.
Then, for any positive integer ${\gamma}_{0}> 1$ there exists a
sequence of asymptotically good convolutional codes, over ${\mathbb
F}_{q}$, with degree $\gamma = {\gamma}_{0}$.
\end{theorem}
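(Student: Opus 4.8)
The plan is to generalize the construction in Theorem~\ref{main1} so that instead of producing unit-memory codes (degree $\gamma = k_t - 1$ in the previous argument, but with memory $1$), I produce codes whose overall degree equals a prescribed constant $\gamma_0 > 1$. The key idea is to keep the block-code input the same --- namely the transitive AG codes $C_t$ from Theorem~\ref{henni}, with generator matrix $G_t$ of $C_t$ serving as a parity-check matrix of $C_t^\perp$ --- but to partition $G_t$ into $m+1 = \gamma_0 + 1$ horizontal submatrices rather than just two. Concretely, I would first extract, exactly as in Theorem~\ref{main1}, a subsequence along which $k_t > \gamma_0$ and $k_t \to \infty$; this is possible because $(k_j)_{j\ge0}$ is unbounded, and I need $k_t$ large enough to host the splitting.

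Next I would split the parity-check matrix $G_t$ (which has $k_t$ rows) into blocks $H_0, H_1, \ldots, H_{\gamma_0}$ so that $H_0$ contains enough rows to have full row rank $\kappa$ and each subsequent $H_i$ has row rank at most $\kappa$, matching the hypotheses of Theorem~\ref{A}. The cleanest choice is to let $H_0$ consist of the first $k_t - \gamma_0$ rows of $G_t$ (so $\kappa = k_t - \gamma_0$, assuming $G_t$ has full row rank as a generator matrix) and let each $H_i$, for $1 \le i \le \gamma_0$, consist of a single one of the remaining $\gamma_0$ rows, padded with zero-rows at the bottom to reach $\kappa$ rows. Forming $\mathbb{G}_t(D) = \tilde H_0 + \tilde H_1 D + \cdots + \tilde H_{\gamma_0} D^{\gamma_0}$ then yields, by Theorem~\ref{A}, a reduced basic (hence non-catastrophic) generator matrix of a convolutional code $V_t$ with $k_t - \gamma_0$ rows, length $n_t$, and free distance bounded below by $d_t = d(C_t)$.

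I then need to verify that the resulting code has degree exactly $\gamma_0$ and that the sequence $(V_t)_{t\ge0}$ is asymptotically good in the sense of Definition~\ref{defmain1}. The degree is the sum of the row degrees $\gamma_i = \max_{1\le j \le n}\{\deg g_{ij}\}$; with the splitting above, each of the $\gamma_0$ nonzero rows of the highest-degree blocks contributes its degree, and arranging the splitting so that exactly one row attains degree $i$ for each $i$ (or more simply, so the row degrees sum to $\gamma_0$) gives overall degree $\gamma = \gamma_0$. For asymptotic goodness, I would check $\lim_{t\to\infty}(k_t - \gamma_0)/n_t \ge R > 0$, which holds because $\gamma_0$ is a fixed constant while $k_t, n_t \to \infty$ with $k_t/n_t \ge R$. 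For the distance ratio, I compute $s_t = (n_t - k_t + \gamma_0)[\lfloor \gamma_0/(k_t-\gamma_0)\rfloor + 1] + \gamma_0 + 1$; since $k_t - \gamma_0 > \gamma_0$ for large $t$, the floor term is $0$ and $s_t = n_t - k_t + 2\gamma_0 + 1 \le n_t$ eventually, so $r_t = \max\{n_t, s_t\} = n_t$, whence ${(d_f)}_t / r_t \ge d_t / n_t$ and $\limsup_{t\to\infty}{(d_f)}_t/r_t \ge \delta > 0$.

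The main obstacle I anticipate is not any single hard estimate but rather the bookkeeping required to arrange the row-rank and row-degree conditions simultaneously: I must guarantee that $\kappa = \operatorname{rk} H_0$ with $\operatorname{rk} H_i \le \kappa$ for the hypotheses of Theorem~\ref{A}, while at the same time forcing the row degrees to sum to exactly $\gamma_0$ rather than merely being bounded by it. The padding-with-zero-rows device handles the rank condition, but one must confirm that adding zero-rows does not inadvertently raise the rank or alter the degree count, and that the block $H_0$ genuinely has the full rank $\kappa$ needed (which follows from $G_t$ being a generator matrix of $C_t$, hence of full row rank $k_t$). Once these structural conditions are secured, the asymptotic statements follow from the same limit arguments as in Theorem~\ref{main1}, now with the harmless fixed shift by $\gamma_0$.
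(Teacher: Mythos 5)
Your proposal is correct in substance and follows the paper's skeleton (dualize $C_t$, split its generator matrix $G_t$, invoke Theorem~\ref{A}, then run the same limit computation with $s_t$ and $r_t$), but it differs from the paper in the one step that actually produces the degree. The paper keeps a two-block splitting: $H_0$ is the first $k_t-\gamma_0$ rows and $H_1$ is the last $\gamma_0$ rows of $G_t$, so $\mathbb{G}_t(D)=G_t^{*}+\tilde{L}_t D$ has $\gamma_0$ rows of degree $1$ each; the resulting code has parameters $(n_t,\,k_t-\gamma_0,\,\gamma_0;\,1,\,(d_f)_t)_q$ and stays \emph{unit-memory}, consistent with the paper's announced restriction to unit-memory codes. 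You instead distribute the last $\gamma_0$ rows one per block $H_1,\dots,H_{\gamma_0}$, which (with the standard pad-at-the-bottom convention) concentrates all the $D^i$ terms in row $1$, giving a single row of degree $\gamma_0$ and hence total degree $\gamma_0$ but memory $m=\gamma_0$. Since the theorem only prescribes the degree, your variant still proves the statement, and your rank bookkeeping ($\operatorname{rk}H_0=\kappa=k_t-\gamma_0$, $\operatorname{rk}H_i=1\leq\kappa$) and your Singleton computation $s_t=n_t-k_t+2\gamma_0+1$ are both sound (indeed more careful than the paper's, which records $s_t=n_t-k_t+\gamma_0+1$). Two cautions: your parenthetical alternative ``exactly one row attains degree $i$ for each $i$'' would give total degree $1+2+\cdots+\gamma_0=\gamma_0(\gamma_0+1)/2$, not $\gamma_0$, so you must stick with the arrangement in which the row degrees sum to $\gamma_0$ (the default padding does this automatically); and your aside that Theorem~\ref{main1} produces degree $k_t-1$ is a misreading --- there the degree is $1$ and $k_t-1$ is the dimension. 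Neither slip affects the validity of your construction.
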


\begin{proof}
By the same reasoning utilized in the first part of the proof of
Theorem~\ref{main1}, we construct a subsequence ${(k_{t})}_{t \geq
0}$ of ${(k_{j})}_{j \geq 0}$ such that $k_{t} \rightarrow \infty$
as $t \rightarrow \infty$ with ${\gamma}_{0} < k_{1} < k_{2} <
\ldots < k_{t} < \ldots $, where $n = 1, 2, 3, \ldots $.
Additionally, it is clear that $n_{t} \rightarrow \infty$ as $t
\rightarrow \infty$ (alternatively, proceeding similarly as in the
proof of Theorem~\ref{main1}, we show the existence of a positive
integer $n_0$ such that, for all $j > n_0$, one has $k_j >
{\gamma}_{0}$). We then take the dual $C_{t}^{\perp}$ of $C_{t}$,
for every $t \geq 0$ and consider a parity check matrix $G_{t}$ of
$C_{t}^{\perp}$ which is a generator matrix of $C_t$. We construct a
sequence of convolutional codes $V_t$, for all $t \geq 0$, generated
by the reduced basic matrix ${ \mathbb G}_{t}(D) = G_{t}^{*} +
{\tilde L}_{t} D$, where $G_{t}^{*}$ is the submatrix of $G_{t}$
consisting of the first $k_{t} - {\gamma}_{0}$ rows of $G_{t}$ and
${\tilde L}_{t}$ is the matrix consisting of the last ${\gamma}_{0}$
rows of $G_{t}$ together $k_t - 2{\gamma}_{0}$ zero-rows at the
bottom. The code $V_t$ is a unit-memory code with parameters
$(n_{t}, k_{t}-{\gamma}_{0}, {\gamma}_{0}; 1, {(d_{f})}_{t}
{)}_{q}$, $t \geq 0$. We know that $s_t = n_t - k_t + {\gamma}_{0} +
1$ and $r_t = n_t$. Since ${\lim}_{t \rightarrow \infty}
(k_{t}-{\gamma}_{0})/n_{t} > 0$ and ${\lim}_{t \rightarrow \infty}
{(d_f)}_{t}/r_{t} > 0 $ are true, the result follows.
\end{proof}


We next construct new families of asymptotically good convolutional
codes obtained by expanding, extending, puncturing, direct sum, the
$\langle u|u+v\rangle$ construction and by the product code
construction applied to the sequence of transitive codes shown in
Theorem~\ref{henni}. For more details concerning such techniques of
construction, see \cite{Macwilliams:1977,Huffman:2003}.

From now on, to simplify the notation, we always consider the
existence of a (sub)sequence ${(C_{j})}_{j\geq 0}$ of linear codes
with parameters ${[n_j , k_j , d_j ]}_{q}$, constructed in
Theorem~\ref{henni}, such that $k_j > 1$ (or $k_j >  {\gamma}_{0}$,
for all ${\gamma}_{0}\geq 1$, according to Theorem~\ref{main1A}) for
all $j \geq 0$.

\begin{theorem}\label{main2}
Let $q^{m} = l^2$ be a prime power, where $l \geq 3$ and $m > 1$ are
integers. Then there exists a sequence of asymptotically good
convolutional codes, over ${\mathbb F}_{q}$, derived from expansion
of transitive codes.
\end{theorem}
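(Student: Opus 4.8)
The plan is to combine subfield expansion of block codes with the unit-memory convolutional construction already used in Theorem~\ref{main1}. First I would invoke Theorem~\ref{henni} over the field $\mathbb{F}_{q^m} = \mathbb{F}_{l^2}$, which is legitimate since $q^m = l^2$ is a square with $l \geq 3$. This yields a sequence $(C_j)_{j \geq 0}$ of transitive codes with parameters ${[n_j, k_j, d_j]}_{q^m}$ satisfying $n_j \rightarrow \infty$, $\lim_{j} k_j/n_j \geq R > 0$ and $\lim_{j} d_j/n_j \geq \delta > 0$. Exactly as in the first paragraph of the proof of Theorem~\ref{main1}, I would pass to a subsequence (still written $(C_j)$) with $k_j > 1$ for every $j$, so that the convolutional step below applies.

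Next I would fix an $\mathbb{F}_q$-basis $\{b_1, \ldots, b_m\}$ of $\mathbb{F}_{q^m}$ and let $\beta \colon \mathbb{F}_{q^m} \rightarrow \mathbb{F}_q^{m}$ be the resulting coordinate map, extended coordinatewise to $\mathbb{F}_{q^m}^{n_j} \rightarrow \mathbb{F}_q^{m n_j}$. The expanded code $C_j^{\operatorname{exp}} := \beta(C_j)$ is then an $\mathbb{F}_q$-linear block code of length $N_j = m n_j$ and dimension $K_j = m k_j$; since a nonzero $\mathbb{F}_{q^m}$-symbol expands to a nonzero block in $\mathbb{F}_q^{m}$, its minimum distance $D_j$ obeys $D_j \geq d_j$. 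I would then check that $(C_j^{\operatorname{exp}})$ is asymptotically good over $\mathbb{F}_q$: the rate is preserved exactly, $K_j/N_j = k_j/n_j$, so $\lim_{j} K_j/N_j \geq R > 0$; and $D_j/N_j \geq d_j/(m n_j)$ gives $\liminf_{j} D_j/N_j \geq \delta/m > 0$.

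Finally I would run the Piret--Aly construction of Theorem~\ref{A} on the expanded codes, mirroring Theorem~\ref{main1}. Taking a generator matrix $G_j$ of $C_j^{\operatorname{exp}}$ (equivalently a parity-check matrix of ${(C_j^{\operatorname{exp}})}^{\perp}$), I split it as $\mathbb{G}_j(D) = G_j^{*} + \tilde{L}_j D$, with $G_j^{*}$ the first $K_j - 1$ rows and $\tilde{L}_j$ the last row padded by zero-rows. This produces a unit-memory convolutional code $V_j$ with parameters $(m n_j, m k_j - 1, 1; 1, {(d_f)}_j)_q$, and Theorem~\ref{A} gives ${(d_f)}_j \geq D_j \geq d_j$. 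A short computation yields $s_j = N_j - K_j + 3$ and therefore $r_j = N_j$ (using $K_j \geq 3$). Hence $(K_j - 1)/N_j \rightarrow \lim_{j} k_j/n_j > 0$, while ${(d_f)}_j/r_j \geq d_j/(m n_j)$ forces $\limsup_{j} {(d_f)}_j/r_j \geq \delta/m > 0$, so $(V_j)$ is asymptotically good.

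The main obstacle---indeed essentially the only nontrivial point---is controlling the minimum distance after expansion. Expansion multiplies the length by $m$ while only guaranteeing $D_j \geq d_j$, so the relative distance is a priori diluted by the factor $1/m$; the argument works precisely because $\liminf_{j} d_j/n_j \geq \delta > 0$ keeps the bound $\delta/m$ strictly positive. I would also take care to confirm that the basis-dependent map $\beta$ really produces an $\mathbb{F}_q$-linear code of the asserted dimension $m k_j$ (injectivity of $\beta$ on each coordinate), after which the convolutional step is word-for-word that of Theorem~\ref{main1}.
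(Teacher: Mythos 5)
Your proposal is correct and follows essentially the same route as the paper: expand the transitive codes from Theorem~\ref{henni} over ${\mathbb F}_{q^m}$ via an ${\mathbb F}_q$-basis to get ${[mn_j, mk_j, D_j \geq d_j]}_q$ codes, then apply the Piret--Aly unit-memory construction exactly as in Theorem~\ref{main1} and verify $s_j = m(n_j-k_j)+3$, $r_j = mn_j$, and the two $\limsup$ conditions. Your version is in fact slightly cleaner than the paper's (which writes ``$k_j - 1$ first rows'' where $mk_j - 1$ is meant, and uses the looser bound $r_j \leq mn_j + 1$), but the argument is the same.
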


\begin{proof}
Let $R > 0$ and $\delta > 0$ be real numbers with $R = 1 - \delta
-1/(l-1)$. Consider a (sub)sequence ${(C_{j})}_{j\geq 0}$ of
asymptotically good linear codes $C_j$, over ${ \mathbb F}_{q^{m}}$,
with parameters ${[n_j , k_j , d_j ]}_{q^{m}}$, shown in
Theorem~\ref{henni}. Let $\beta=\{b_1, b_2, \ldots, b_{m}\}$ be a
basis of ${\mathbb F}_{q^{m}}$ over ${\mathbb F}_{q}$. We expand all
codes $C_j$ with respect to $\beta$ generating codes $\beta(C_{j})$,
over ${\mathbb F}_{q}$, with parameters ${[mn_j , mk_j , d_{j}^{*}
\geq d_j ]}_{q}$, for all $j \geq 0$. Consider the dual
${[\beta(C_{j})]}^{\perp}$ of the code $\beta(C_{j})$, for all $j
\geq 0$. A parity check matrix $G_{j}$ of ${[\beta(C_{j})]}^{\perp}$
is a generator matrix of $\beta(C_{j})$, for all $j \geq 0$.
Proceeding similarly as in the proof of Theorem~\ref{main1}, the
result follows.

Let $V_j$ be the convolutional code generated by the reduced basic
matrix ${ \mathbb G}_{j}(D) = G_{j}^{*} + {\tilde L}_{j} D$, where
$G_{j}^{*}$ is the submatrix of $G_{j}$ consisting of the $k_{j} -
1$ first rows of $G_{j}$ and ${\tilde L}_{j}$ is the matrix
consisting of the last row of $G_{j}$ together $k_j - 2$ zero-rows
at the bottom, for all $j \geq 0$. $V_j$ has parameters $(mn_{j},
mk_{j}-1, 1 ; 1, {(d_{f})}_{j} {)}_{q}$, where ${(d_f)}_{j} \geq
d_{j}^{*} \geq d_j$. We know that ${\lim}_{j \rightarrow \infty}
(mk_{j}-1)/mn_{j}\geq R
> 0 $. On the other hand, one has $s_j = m(n_j - k_j) + 3$ and $r_j \leq
mn_j + 1$. Thus, it follows that  ${(d_f)}_{j}/r_{j}\geq
{(d_f)}_{j}/(mn_j +1)\geq d_j / (mn_j +1)$. Because ${\lim}_{j
\rightarrow \infty} d_j / (mn_j +1) = [1/m] {\lim}_{j \rightarrow
\infty} d_j /n_j$, we conclude that ${\lim}_{j \rightarrow \infty}
{(d_f)}_{j}/ r_j > 0$. Thus, the sequence ${(V_{j})}_{j\geq 0}$ of
convolutional codes is asymptotically good and the proof is
complete.
\end{proof}

Applying the techniques of combining codes, we can get more
sequences of good convolutional codes.

\begin{theorem}\label{main3}
Let $q = l^2$ be a prime power, where $l \geq 3$ is an integer. Then
there exists a sequence of asymptotically good convolutional codes,
over ${\mathbb F}_{q}$, derived from direct sum of transitive codes.
\end{theorem}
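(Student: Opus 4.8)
The plan is to mimic the structure established in Theorems~\ref{main1} and~\ref{main2}, since the direct sum construction preserves the asymptotic ratios in a transparent way. Recall that given two linear block codes $C \subseteq {\mathbb F}_q^{n}$ and $C' \subseteq {\mathbb F}_q^{n'}$ with parameters ${[n, k, d]}_{q}$ and ${[n', k', d']}_{q}$, their direct sum $C \oplus C' = \{(c, c') \mid c \in C, c' \in C'\}$ is a code with parameters ${[n + n', k + k', \min\{d, d'\}]}_{q}$. First I would invoke the extraction argument from the proof of Theorem~\ref{main1} to obtain a (sub)sequence ${(C_{j})}_{j\geq 0}$ of the transitive codes of Theorem~\ref{henni}, over ${\mathbb F}_{q}$ with parameters ${[n_j, k_j, d_j]}_{q}$, satisfying $k_j > 1$ for all $j$, together with $n_j \to \infty$, ${\lim}_{j \rightarrow \infty} k_{j}/n_{j} \geq R > 0$ and ${\lim}_{j \rightarrow \infty} d_{j}/n_{j} \geq \delta > 0$.

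Next I would form the direct sum $D_j := C_j \oplus C_j$ (the simplest choice, though one could take the direct sum of two consecutive members of the sequence), which has parameters ${[2n_j, 2k_j, d_j]}_{q}$. The key point is that the ratios are essentially unchanged: $(2k_j)/(2n_j) = k_j/n_j$, so the rate limit is still at least $R > 0$, while $d_j/(2n_j) = (1/2)(d_j/n_j)$, so the relative distance limit is at least $\delta/2 > 0$. Thus ${(D_{j})}_{j\geq 0}$ is again an asymptotically good sequence of block codes. I would then pass to the dual $D_{j}^{\perp}$ and take a parity check matrix $G_j$ of $D_{j}^{\perp}$, which is a generator matrix of $D_j$ with $2k_j$ rows and $2n_j$ columns.

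Following the unit-memory splitting of Theorem~\ref{main1}, I would build the convolutional code $V_j$ generated by the reduced basic matrix ${\mathbb G}_{j}(D) = G_{j}^{*} + {\tilde L}_{j} D$, where $G_{j}^{*}$ consists of the first $2k_j - 1$ rows of $G_j$ and ${\tilde L}_{j}$ consists of the last row of $G_j$ padded with $2k_j - 2$ zero-rows at the bottom. By Theorem~\ref{A} this is a reduced basic (hence non-catastrophic) generator matrix and ${(d_f)}_{j} \geq d_j$. The resulting code $V_j$ has parameters $(2n_j, 2k_j - 1, 1; 1, {(d_f)}_{j})_{q}$. Here $s_j = 2n_j - 2k_j + 2$, and since $k_j > 1$ forces $s_j \leq 2n_j$, we get $r_j = 2n_j$. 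Then ${\lim}_{j \rightarrow \infty}(2k_j - 1)/(2n_j) \geq R > 0$, and ${(d_f)}_{j}/r_j \geq d_j/(2n_j)$ gives ${\lim}_{j \rightarrow \infty}{(d_f)}_{j}/r_j \geq \delta/2 > 0$, so ${(V_{j})}_{j\geq 0}$ is asymptotically good.

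I expect this to be almost entirely routine, because the direct sum interacts so cleanly with the rate and relative-distance limits; the only genuine point requiring care is verifying $r_j = n(D_j) = 2n_j$ (i.e. that the Singleton quantity $s_j$ does not exceed the length), which follows exactly as in Theorem~\ref{main1} from the standing assumption $k_j > 1$. A secondary subtlety is the dimension bookkeeping: one must confirm that $G_j$ genuinely has $2k_j$ rows so that the split into $2k_j - 1$ and $1$ rows is valid, which holds because the transitive codes and hence their direct sums have full-rank generator matrices.
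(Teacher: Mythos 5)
Your proof is correct and follows essentially the same route as the paper: form $C_j\oplus C_j$ with parameters ${[2n_j,2k_j,d_j]}_{q}$, split its generator matrix into a unit-memory reduced basic matrix via Theorem~\ref{A}, and check the two limits (indeed your row bookkeeping, splitting into $2k_j-1$ rows plus one row, is cleaner than the paper's, which writes $k_j-1$). The only blemish is an off-by-one in the Singleton quantity: with $n=2n_j$, $k=2k_j-1$, $\gamma=1$ one gets $s_j=2(n_j-k_j)+3$, not $2n_j-2k_j+2$; since $k_j\geq 2$ this still gives $s_j\leq 2n_j$ and hence $r_j=2n_j$, so the conclusion is unaffected.
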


\begin{proof}
Consider a (sub)sequence of codes ${(C_{j})}_{j\geq 0}$ shown in
Theorem~\ref{henni} with $k_j > 1$ for all $j \geq 0$. We construct
the sequence of direct sum codes $C_{j}\oplus C_{j}$ with parameters
${[2n_j , 2k_j , d_j ]}_{q}$, for all $j \geq 0$. Let $G_{j}$ be a
generator matrix of the code $C_{j}\oplus C_{j}$, for all $j \geq
0$. Consider the dual ${[C_{j}\oplus C_{j}]}^{\perp}$ of the code
$C_{j}\oplus C_{j}$, $j \geq 0$; $G_{j}$ is a parity check matrix
for ${[C_{j}\oplus C_{j}]}^{\perp}$.

Let $V_j$ be the convolutional code generated, by the reduced basic
matrix ${ \mathbb G}_{j}(D) = G_{j}^{*} + {\tilde L}_{j} D$, where
$G_{j}^{*}$ is the submatrix of $G_{j}$ consisting of the $k_{j} -
1$ first rows of $G_{j}$ and ${\tilde L}_{j}$ is the matrix
consisting of the last row of $G_{j}$ together $k_j - 2$ zero-rows
at the bottom, for all $j \geq 0$. $V_j$ has parameters $(2n_{j},
2k_{j}-1, 1 ; 1, {(d_{f})}_{j} {)}_{q}$, where ${(d_f)}_{j} \geq
d_j$. We know that $s_j = 2(n_j - k_j) + 3$ and $r_j \leq 2n_j + 1$,
for all $j \geq 0$, the free distance ${(d_f)}_{j}$ of the code
$V_j$ satisfies ${(d_f)}_{j}/r_{j}\geq d_j / (2n_j +1)$. Because
${\lim}_{j \rightarrow \infty} d_j / (2n_j +1) = [1/2] {\lim}_{j
\rightarrow \infty} d_j /n_j$, it follows that ${\lim}_{j
\rightarrow \infty} {(d_f)}_{j}/ r_j > 0$. On the other hand,
${\lim}_{j \rightarrow \infty} (2k_{j}-1)/2n_{j}= {\lim}_{j
\rightarrow \infty} k_{j}/n_{j}
> 0$. Therefore, the sequence ${(V_{j})}_{j\geq 0}$ of convolutional codes is
asymptotically good.
\end{proof}

\begin{theorem}\label{main4}
Let $q = l^2$ be a prime power, where $l \geq 3$ is an integer. Then
there exists a sequence of asymptotically good convolutional codes,
over ${\mathbb F}_{q}$, derived from the $\langle u|u+v\rangle$
construction of transitive codes.
\end{theorem}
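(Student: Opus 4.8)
The goal is Theorem~\ref{main4}: construct an asymptotically good sequence of convolutional codes from the $\langle u|u+v\rangle$ construction applied to the transitive codes of Theorem~\ref{henni}. The plan is to follow exactly the template established in Theorems~\ref{main1}, \ref{main2} and \ref{main3}: take the sequence of block codes produced by the $\langle u|u+v\rangle$ construction, pass to its dual, peel off one row to create a unit-memory convolutional code via Piret's construction (Theorem~\ref{A}), and then verify the two $\limsup$ conditions of Definition~\ref{defmain1}. The only genuinely new content is computing the parameters of the $\langle u|u+v\rangle$ code and of its dual.

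First I would recall the parameters of the base construction. Given a (sub)sequence $(C_j)_{j\geq 0}$ from Theorem~\ref{henni} with $k_j > 1$, parameters ${[n_j,k_j,d_j]}_q$, the standard $\langle u|u+v\rangle$ construction applied to $C_j$ with itself yields a code $C_j'$ of length $2n_j$ and dimension $2k_j$ (each of the $u$ and $u+v$ blocks contributes $k_j$ dimensions), with minimum distance $d_j' = \min\{2d_j, d_j\} = d_j$ when one takes both constituent codes equal to $C_j$; more precisely for $\langle u|u+v\rangle$ with inner codes of distances $d^{(1)}, d^{(2)}$ the distance is $\min\{2d^{(1)}, d^{(2)}\}$, so with both equal to $d_j$ one gets distance $d_j$. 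Hence $C_j'$ has parameters ${[2n_j, 2k_j, d_j]}_q$. Next I would form the Euclidean dual ${[C_j']}^{\perp}$, whose generator matrix is a parity-check matrix $G_j$ of $C_j'$ and simultaneously a generator matrix of $C_j'$ itself (since $G_j$ generates $C_j'$, it is a parity check for the dual).

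Then I would apply Piret's splitting: define the unit-memory convolutional code $V_j$ generated by ${\mathbb G}_j(D) = G_j^{*} + {\tilde L}_j D$, where $G_j^{*}$ consists of the first $2k_j - 1$ rows of $G_j$ and ${\tilde L}_j$ consists of the last row together with $2k_j - 2$ zero-rows, so that $\kappa = 2k_j - 1 = \operatorname{rk}H_0$ and the hypotheses of Theorem~\ref{A} hold. This gives $V_j$ with parameters $(2n_j, 2k_j - 1, 1; 1, {(d_f)}_j)_q$ and, by Theorem~\ref{A}, ${(d_f)}_j \geq d_j$ (the relevant dual distance being that of $(C_j')^{\perp\perp} = C_j'$, namely $d_j$). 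Finally I would check the asymptotic conditions: the Singleton-type quantity is $s_j = (2n_j - (2k_j-1))\lfloor 1/(2k_j-1)+1\rfloor + 1 + 1 = 2(n_j - k_j) + 3$, so $r_j = \max\{2n_j, s_j\} \leq 2n_j + 1$, giving ${(d_f)}_j/r_j \geq d_j/(2n_j+1)$, and since ${\lim}_{j\to\infty} d_j/(2n_j+1) = (1/2)\lim d_j/n_j > 0$ by Theorem~\ref{henni}, the free-distance condition holds; likewise $(2k_j-1)/2n_j \to \lim k_j/n_j > 0$.

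The analysis is a near-verbatim adaptation of the direct-sum proof in Theorem~\ref{main3}, since the $\langle u|u+v\rangle$ code and the direct sum $C_j\oplus C_j$ share the same length $2n_j$ and dimension $2k_j$, and differ only in minimum distance; both nonetheless yield distance at least $d_j$, so the final estimates coincide. The step I expect to require the most care is pinning down the exact minimum distance of the $\langle u|u+v\rangle$ code and, more importantly, identifying which dual distance $d^{\perp}$ governs the bound ${(d_f)}_j \geq d^{\perp}$ in Theorem~\ref{A}: here $G_j$ is a generator matrix of $C_j'$ serving as a parity-check matrix of $(C_j')^\perp$, so the relevant code whose dual distance is invoked is $C_j' = ((C_j')^\perp)^\perp$, with distance $d_j$. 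Getting this duality bookkeeping right — rather than the routine limit computations — is the only place an error could slip in.
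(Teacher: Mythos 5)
Your proposal is correct and matches the paper's intent exactly: the paper's own proof of Theorem~\ref{main4} is simply the remark that it is ``similar to that of Theorem~\ref{main3},'' and your argument is precisely that adaptation, with the right parameters ${[2n_j,2k_j,\min\{2d_j,d_j\}=d_j]}_q$ for the $\langle u|u+v\rangle$ code and the correct duality bookkeeping for invoking Theorem~\ref{A}. (Your use of the first $2k_j-1$ rows of $G_j$ is the correct count; the paper's Theorem~\ref{main3} writes $k_j-1$ there, which is a slip given that the resulting code is stated to have dimension $2k_j-1$.)
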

\begin{proof}
The proof is similar to that of Theorem~\ref{main3}.
\end{proof}

In Theorems~\ref{main5} and \ref{main6}, we apply code extension and
puncturing of transitive codes, respectively, in order to construct
more two sequences of asymptotically good codes.

\begin{theorem}\label{main5}
Let $q = l^2$ be a prime power, where $l \geq 3$ is an integer. Then
there exists a sequence of asymptotically good convolutional codes,
over ${\mathbb F}_{q}$, derived from extension of transitive codes.
\end{theorem}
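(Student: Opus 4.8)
The plan is to follow the same algebraic template already used in Theorems~\ref{main1}, \ref{main2} and \ref{main3}, replacing the block-code operation by the extension operation. First I would start from a (sub)sequence ${(C_{j})}_{j\geq 0}$ of transitive codes from Theorem~\ref{henni} with parameters ${[n_j , k_j , d_j ]}_{q}$ and $k_j > 1$ for all $j$, and form the \emph{extended} codes $\overline{C_{j}}$ obtained by appending an overall parity-check symbol. The extended code $\overline{C_{j}}$ has parameters ${[n_j + 1, k_j , \overline{d_j}]}_{q}$, where $\overline{d_j} \geq d_j$ (in the even-characteristic or appropriately normalized case one gets $\overline{d_j} = d_j + 1$, but the inequality $\overline{d_j} \geq d_j$ is all that is needed). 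The key structural point is that $n_j + 1$ and $k_j$ grow in the same proportion as $n_j$ and $k_j$, so the asymptotic ratios are preserved.

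Next I would take the Euclidean dual ${[\overline{C_{j}}]}^{\perp}$, whose parity check matrix $G_{j}$ is a generator matrix of $\overline{C_{j}}$, and apply Piret's construction exactly as before: let $V_j$ be the unit-memory convolutional code generated by the reduced basic matrix ${ \mathbb G}_{j}(D) = G_{j}^{*} + {\tilde L}_{j} D$, where $G_{j}^{*}$ consists of the first $k_{j} - 1$ rows of $G_{j}$ and ${\tilde L}_{j}$ consists of the last row of $G_{j}$ together with $k_j - 2$ zero-rows at the bottom. By Theorem~\ref{A}, $G_j$ being a parity check matrix of ${[\overline{C_{j}}]}^{\perp}$ (equivalently a generator matrix of $\overline{C_{j}}$), the code $V_j$ is non-catastrophic and its free distance satisfies ${(d_f)}_{j} \geq \overline{d_j} \geq d_j$. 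The code $V_j$ then has parameters $(n_{j}+1, k_{j}-1, 1 ; 1, {(d_{f})}_{j} {)}_{q}$.

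It then remains to verify the two asymptotic inequalities of Definition~\ref{defmain1}. For the rate, ${\lim}_{j \rightarrow \infty} (k_{j}-1)/(n_{j}+1) = {\lim}_{j \rightarrow \infty} k_{j}/n_{j} \geq R > 0$, since the additive constants are negligible as $n_j \to \infty$. For the distance ratio, I would compute $s_j = (n_j + 1 - k_j + 1)\lfloor 1/(k_j-1)\rfloor + \cdots$; since $k_j - 1 \geq 1$ and $k_j \to \infty$, for large $j$ one has $\lfloor \gamma_j/(k_j-1)\rfloor = 0$, giving $s_j = (n_j + 1 - (k_j - 1)) + 1 + 1 = n_j - k_j + 4$, so that $r_j = \max\{n_j + 1, s_j\} \leq n_j + 4 \leq n_j + c$ for a small constant. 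Hence ${(d_f)}_{j}/r_{j} \geq d_j/(n_j + c)$, and ${\lim}_{j \rightarrow \infty} d_j/(n_j + c) = {\lim}_{j \rightarrow \infty} d_j/n_j \geq \delta > 0$. This yields ${\limsup}_{j \rightarrow \infty} {(d_f)}_{j}/r_j > 0$, so ${(V_{j})}_{j\geq 0}$ is asymptotically good.

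I do not expect a genuine obstacle, since extension preserves the dimension and changes the length by only one; the one point requiring mild care is the bookkeeping of $r_j$, that is, confirming that $r_j = n_j + 1$ (equivalently that $s_j \leq n_j + 1$) for all large $j$ so that the distance ratio is controlled by $d_j/n_j$ rather than by the Singleton quantity $s_j$. This is immediate once $k_j > 1$ forces the floor term $\lfloor \gamma_j/(k_j-1)\rfloor$ to vanish, exactly as in the preceding proofs.
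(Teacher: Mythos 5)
Your proposal is correct and follows essentially the same route as the paper: extend the transitive codes to ${[n_j+1, k_j, d_j^{e}]}_{q}$ codes, apply Piret's construction (Theorem~\ref{A}) to a generator matrix viewed as a parity check matrix of the dual, obtain unit-memory codes with parameters $(n_j+1, k_j-1, 1; 1, {(d_f)}_j{)}_q$ and ${(d_f)}_j \geq d_j$, and verify $s_j = n_j - k_j + 4$ so that both limits in Definition~\ref{defmain1} are positive. Your bookkeeping of $r_j$ is in fact slightly more careful than the paper's, but the argument is the same.
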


\begin{proof}
Assume that ${(C_{j})}_{j\geq 0}$ is a (sub)sequence of codes shown
in Theorem~\ref{henni} with $k_j > 1$ for all $ j > 0$. We construct
a sequence ${(C_{j}^{e})}_{j\geq 0}$ of codes with parameters ${[n_j
+ 1, k_j , d_{j}^{e} ]}_{q}$, where $d_{j}^{e}=d_j$ or
$d_{j}^{e}=d_j + 1$. The sequence of convolutional codes is
constructed similarly as in the proofs of Theorems~\ref{main1} to
\ref{main3}. We know that ${\lim}_{j \rightarrow \infty} (k_j -
1)/(n_{j}+1)= {\lim}_{j \rightarrow \infty} k_j/n_{j} > 0$ and that
$s_j = n_j - k_j + 4$. Hence, ${\lim}_{j \rightarrow \infty}
 {(d_f)}_{j}/r_j \geq {\lim}_{j \rightarrow \infty} d_j/r_{j} > 0$. The proof is complete.
\end{proof}

\begin{theorem}\label{main6}
Let $q = l^2$ be a prime power, where $l \geq 3$ is an integer. Then
there exists a sequence of asymptotically good convolutional codes,
over ${\mathbb F}_{q}$, derived from puncturing transitive codes.
\end{theorem}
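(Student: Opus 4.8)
The plan is to follow verbatim the template already used in Theorems~\ref{main1}--\ref{main5}: start from a suitable subsequence of the transitive codes of Theorem~\ref{henni}, apply the relevant block-code operation (here, puncturing), pass to the dual, build a unit-memory convolutional code by means of Theorem~\ref{A}, and finally verify the two $\limsup$ conditions of Definition~\ref{defmain1}. Only the block-code step differs from the earlier proofs.

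First I would fix a (sub)sequence ${(C_{j})}_{j \geq 0}$ of the codes of Theorem~\ref{henni} with $k_j > 1$, $\lim_{j} k_j/n_j \geq R > 0$ and $\lim_{j} d_j/n_j \geq \delta > 0$. Puncturing each $C_j$ at a single coordinate produces a code $C_{j}^{p}$ with parameters ${[n_j - 1, k_j, d_{j}^{p}]}_{q}$, where $d_{j}^{p} \geq d_j - 1$. I would then take the dual ${[C_{j}^{p}]}^{\perp}$, whose parity-check matrix $G_j$ is a generator matrix of $C_{j}^{p}$, partition $G_j$ into $G_{j}^{*}$ (the first $k_j - 1$ rows) and $\tilde{L}_{j}$ (the last row together with $k_j - 2$ zero-rows at the bottom), and form $\mathbb{G}_{j}(D) = G_{j}^{*} + \tilde{L}_{j} D$. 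Since $k_j > 1$ gives $\operatorname{rk} G_{j}^{*} = k_j - 1 = \kappa$ and $\operatorname{rk} \tilde{L}_{j} = 1 \leq \kappa$, Theorem~\ref{A} applies and yields a reduced basic generator matrix of a unit-memory code $V_j$ with parameters $(n_j - 1, k_j - 1, 1; 1, (d_f)_j)_q$ and $(d_f)_j \geq d_{j}^{p} \geq d_j - 1$.

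The last step is the asymptotic bookkeeping. With $\gamma = 1$ and $k_j - 1 \geq 1$ one has $\lfloor \gamma/(k_j - 1) \rfloor = 0$, hence $s_j = (n_j - k_j) + 2$ and therefore $r_j = \max\{n_j - 1, n_j - k_j + 2\} \leq n_j$. Then $\lim_{j} (k_j - 1)/(n_j - 1) = \lim_{j} k_j/n_j \geq R > 0$, while $(d_f)_j / r_j \geq (d_j - 1)/n_j$, whose limit equals $\lim_{j} d_j/n_j \geq \delta > 0$. Both conditions of Definition~\ref{defmain1} are thus satisfied and ${(V_j)}_{j \geq 0}$ is asymptotically good.

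The main obstacle, and really the only point demanding care beyond the earlier theorems, is justifying that puncturing keeps the dimension equal to $k_j$ (rather than dropping it) while controlling the distance as $d_{j}^{p} \geq d_j - 1$. Both claims hold as soon as $d_j \geq 2$: no nonzero codeword is then supported on the deleted coordinate alone, so the coordinate projection is injective on $C_j$ and the minimum distance decreases by at most one. Since $d_j/n_j \to \delta > 0$ and $n_j \to \infty$ force $d_j \to \infty$, the inequality $d_j \geq 2$ holds for all large $j$, so after passing to a further tail of the subsequence the construction is legitimate throughout.
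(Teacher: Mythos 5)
Your proposal is correct and follows essentially the same route as the paper: puncture the subsequence with $k_j>1$, dualize, apply Piret's construction via Theorem~\ref{A} to get unit-memory codes with parameters $(n_j-1,k_j-1,1;1,(d_f)_j)_q$, and check the two limits against $r_j\leq n_j$. Your closing remark on requiring $d_j\geq 2$ so that puncturing preserves the dimension and drops the distance by at most one is exactly the point the paper handles with its ``w.l.o.g.\ $d_j>1$'' reduction, so the two arguments coincide.
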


\begin{proof}
Let ${(C_{j})}_{j\geq 0}$ be a (sub)sequence of codes shown in
Theorem~\ref{henni}, with $k_j > 1$ for all $ j > 0$. By puncturing
the codes $C_{j}$, $j\geq 0$, in the $i$th coordinate, we construct
a new sequence ${(C_{j}^{p})}_{j\geq 0}$ of codes with parameters
${[n_j -1 , k_j , d_{j}^{p}]}_{q}$, where $d_{j}^{p}= d_j -1$ if
$C_j$ has a minimum weight codeword with a nonzero $i$th coordinate
and $d_{j}^{p}= d_j$ otherwise. Note that we can assume w.l.o.g.
that $d_j > 1$ for all $j\geq 0$; see the first part of the proof of
Theorem~\ref{main1} applied to $d_j$ instead of $k_j$. We construct
a sequence of convolutional codes as in Theorems~\ref{main1} to
\ref{main3}. The sequence ${(V_{j})}_{j\geq 0}$ consists of
convolutional codes with parameters $(n_j - 1, k_j -1, 1 ; 1,
{(d_{f})}_{j} {)}_{q}$, where ${(d_f)}_{j} \geq d_j - 1$ and $s_j =
n_j - k_j + 2$. It is clear that ${\lim}_{j \rightarrow \infty} (k_j
- 1)/(n_{j} - 1) > 0$ and ${\lim}_{j \rightarrow \infty}
{(d_f)}_{j}/r_{j} > 0$. Therefore, ${(V_{j})}_{j\geq 0}$ is
asymptotically good and we are done.
\end{proof}

Before showing Theorem~\ref{main7}, we must recall the direct
product construction of linear (block) codes. For more details, see
\cite{Macwilliams:1977}.

\begin{definition}\cite{Macwilliams:1977}\label{prodcode}
Let $C_1={[n_1, k_1, d_1]}_{q}$ and $C_2={[n_2, k_2, d_2]}_{q}$ be
linear codes both over ${\mathbb F}_{q}$, with generator matrices
$G^{(1)}$ and $G^{(2)}$, respectively. Then the product code
$C_{\pi}:= C_1 \otimes C_2$ is a linear code, over ${\mathbb
F}_{q}$, with parameters $C_{\pi}={[n_1 n_2, k_1 k_2, d_1
d_2]}_{q}$, generated by the matrix $G^{(\pi)}=G^{(1)} \otimes
G^{(2)}$, where $\otimes$ denotes the Kronecker product of matrices,
that is,
\begin{eqnarray*}
G^{(\pi)} = \left[
\begin{array}{cccc}
g_{11}^{(1)}G^{(2)} & g_{12}^{(1)}G^{(2)} & \cdots & g_{1 n_1}^{(1)}G^{(2)}\\
g_{21}^{(1)}G^{(2)} & g_{22}^{(1)}G^{(2)} & \cdots & g_{2 n_1}^{(1)}G^{(2)}\\
\vdots & \vdots & \vdots & \vdots\\
g_{k_1 1}^{(1)}G^{(2)} & g_{k_1 2}^{(1)}G^{(2)} & \cdots & g_{k_1 n_1}^{(1)}G^{(2)}\\
\end{array}
\right],
\end{eqnarray*}
where $g_{ij}^{(t)}$, $t=1, 2$, is the $ij$-th entry of the matrix
$G^{(t)}$, respectively.
\end{definition}

As we will see in Theorem~\ref{main7} given in the following, direct
product codes obtained from transitive codes are also asymptotically
good.

\begin{theorem}\label{main7}
Let $q = l^2$ be a prime power, where $l \geq 3$ is an integer. Then
there exists a sequence of asymptotically good convolutional codes,
over ${\mathbb F}_{q}$, derived from direct product of transitive
codes.
\end{theorem}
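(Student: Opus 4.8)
The plan is to follow the same recipe used in Theorems~\ref{main1} and \ref{main3}, the only new ingredient being that the direct product squares all three block-code parameters. First I would take the (sub)sequence ${(C_{j})}_{j\geq 0}$ of transitive codes from Theorem~\ref{henni} with $k_j > 1$ for all $j \geq 0$, and form the product codes $C_{j}\otimes C_{j}$, which by Definition~\ref{prodcode} have parameters ${[n_j^{2}, k_j^{2}, d_j^{2}]}_{q}$. Letting $G_{j}$ be a generator matrix of $C_{j}\otimes C_{j}$, this $G_{j}$ is a parity check matrix of the dual ${[C_{j}\otimes C_{j}]}^{\perp}$, exactly as in the earlier proofs; note that no transitivity of the product is needed, only its parameters.

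Next I would build the unit-memory convolutional code $V_j$ generated by the reduced basic matrix ${ \mathbb G}_{j}(D) = G_{j}^{*} + {\tilde L}_{j} D$, where $G_{j}^{*}$ consists of the first $k_{j}^{2} - 1$ rows of $G_{j}$ and ${\tilde L}_{j}$ consists of the last row of $G_{j}$ together with $k_j^{2} - 2$ zero-rows at the bottom, so that both blocks have $\kappa = k_j^{2}-1$ rows and the rank hypotheses of Theorem~\ref{A} are met. By Theorem~\ref{A}, $V_j$ then has parameters $(n_{j}^{2}, k_{j}^{2}-1, 1; 1, {(d_{f})}_{j} {)}_{q}$ with ${(d_f)}_{j} \geq d_j^{2}$, since the dual of ${[C_{j}\otimes C_{j}]}^{\perp}$ is $C_{j}\otimes C_{j}$ itself, whose minimum distance is $d_j^{2}$.

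It then remains to verify the two asymptotic inequalities of Definition~\ref{defmain1}. For the rate one has ${\lim}_{j\rightarrow\infty} (k_j^{2}-1)/n_j^{2} = {({\lim}_{j\rightarrow\infty} k_j/n_j)}^{2} > 0$. For the free-distance ratio, the hypothesis $k_j > 1$ forces $\lfloor 1/(k_j^{2}-1)\rfloor = 0$, so $s_j = n_j^{2} - k_j^{2} + 3$, and since $k_j^{2}\geq 3$ this gives $r_j = \max\{n_j^{2}, s_j\} = n_j^{2}$. Consequently ${(d_f)}_{j}/r_{j} \geq d_j^{2}/n_j^{2}$, and passing to the limit yields ${\lim}_{j\rightarrow\infty} {(d_f)}_{j}/r_{j} \geq {({\lim}_{j\rightarrow\infty} d_j/n_j)}^{2} > 0$, so ${(V_{j})}_{j\geq 0}$ is asymptotically good.

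The step I expect to require the most care is not any single calculation but the conceptual point that the product construction replaces the two positive limiting ratios $\lim k_j/n_j$ and $\lim d_j/n_j$ by their squares; the whole argument hinges on the elementary fact that squaring preserves strict positivity, so no new restriction on $l$ or on the rate is needed. The determination $r_j = n_j^{2}$ rather than $r_j = s_j$ is the only place where $k_j > 1$ is genuinely used, and I would verify the inequality $s_j \leq n_j^{2}$ explicitly for the smallest admissible value $k_j = 2$ to be safe.
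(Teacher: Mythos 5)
Your proposal is correct and follows essentially the same route as the paper: form the product codes $C_j\otimes C_j$ with parameters ${[n_j^2,k_j^2,d_j^2]}_q$, apply the Piret/Aly unit-memory construction by splitting a generator matrix into its first $k_j^2-1$ rows and its last row, and verify the two limits of Definition~\ref{defmain1} using $s_j=n_j^2-k_j^2+3$ and $r_j=n_j^2$. You merely spell out the rank and limit computations that the paper dismisses as ``straightforward,'' which is a welcome addition rather than a deviation.
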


\begin{proof}
Let ${(C_{j})}_{j\geq 0}$ be a (sub)sequence of codes shown in
Theorem~\ref{henni}, with $k_j > 1$ for all $ j > 0$. Let us
consider the sequence of product codes ${(C_{j}\otimes
C_{j})}_{j\geq 0}$. This sequence consists of codes with parameters
${[{(n_j)}^{2}, {(k_j)}^{2} , {(d_j)}^{2}]}_{q}$. Constructing the
sequence ${(V_{j})}_{j\geq 0}$ of convolutional codes similarly as
in the proof of Theorems~\ref{main1} to \ref{main6}, we obtain codes
with parameters ${({(n_j)}^{2}, {(k_j)}^{2} - 1, 1 ; 1,
{(d_{f})}_{j} )}_{q}$, with ${(d_f)}_{j} \geq {(d_j)}^{2}$, where
$s_j = {(n_j)}^{2} - {(k_j)}^{2} + 3$. It is straightforward to see
that ${\lim}_{j \rightarrow \infty} [{(k_j)}^{2} - 1]/{(n_j)}^{2} >
0$ and ${\lim}_{j \rightarrow \infty} {(d_f)}_{j}/r_{j} > 0$ are
true. Thus, the result follows.
\end{proof}

Let $C$ be a linear code. Recall that $C$ is called
\emph{self-orthogonal} if $C \subset C^{\perp}$, and $C$ is called
\emph{self-dual} if $C = C^{\perp}$, where $C^{\perp}$ is the
(Euclidean) dual of the code $C$. The following theorem will be also
utilized to construct more sequences of asymptotically good
convolutional codes:

\begin{theorem}\cite[Theorem 1.6]{Henning:2005}\label{henni1}
Let $q = l^2$ be a square. Then the class of self-orthogonal codes
and the class of self-dual codes meets the Tsfasman-Vladut-Zink
bound. More precisely, the following are true:
\begin{enumerate}
\item[ (i)] Let $0\leq R \leq 1/2$ and $\delta \geq 0$ be real numbers with $R =
1 - \delta -1/(l-1)$. Then there exists a sequence ${(C_{j})}_{j\geq
0}$ of linear codes $C_j$ over ${ \mathbb F}_{q}$ with parameters
${[n_j , k_j , d_j ]}_{q}$ such that:\\
a) All $C_j$ are self-orthogonal codes.\\
b) $n_j \rightarrow \infty$ as $j \rightarrow \infty$.\\
c) ${\lim}_{j \rightarrow \infty} k_{j}/n_{j}\geq R $ and ${\lim}_{j
\rightarrow \infty} d_{j}/n_{j}\geq \delta $.

\item[ (ii)] There exists a sequence ${(D_{j})}_{j\geq
0}$ of self-dual codes $D_j$ over ${ \mathbb F}_{q}$ with parameters
${[n_j , n_j /2 , d_j ]}_{q}$ such that $n_j \rightarrow \infty$ and
${\lim}_{j \rightarrow \infty} d_{j}/n_{j}\geq 1/2 -1/(l-1)$.
\end{enumerate}
\end{theorem}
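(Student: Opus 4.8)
The plan is to realize both classes as sequences of algebraic geometry (AG) codes $C_{\mathcal{L}}(D,G)$ arising from the same asymptotically good tower $\mathcal{F}=(F_0\subseteq F_1\subseteq\cdots)$ of function fields over $\mathbb{F}_q$, $q=l^2$, that attains the Drinfeld--Vladut bound $\lim_{i}N(F_i)/g_i=l-1=\sqrt q-1$ (the same tower underlying Theorem~\ref{henni}). On each $F_i$ I would take $D=P_1+\cdots+P_{n_i}$, a sum of distinct rational places, and a divisor $G$ whose support is disjoint from $D$. Riemann--Roch then gives $k=\deg G-g_i+1$ whenever $2g_i-2<\deg G<n_i$, and the Goppa bound gives $d\ge n_i-\deg G$. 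The entire problem thus reduces to choosing $G$ so that the self-orthogonality ($C\subseteq C^{\perp}$) or self-duality ($C=C^{\perp}$) condition holds while the rate $R=k/n_i$ and relative distance $\delta=d/n_i$ remain on the Tsfasman--Vladut--Zink line in the limit.

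The key reduction is the standard duality formula for AG codes: if $\eta$ is a Weil differential with $v_{P_j}(\eta)=-1$ and $\operatorname{res}_{P_j}(\eta)=1$ for every $j$, then $C_{\mathcal{L}}(D,G)^{\perp}=C_{\mathcal{L}}(D,H)$ with $H=D-G+(\eta)$. Consequently self-orthogonality is equivalent to the divisor inequality $2G\le D+(\eta)$, and self-duality to the divisor equality $2G=D+(\eta)$. Taking degrees in the self-dual case forces $\deg G=g_i-1+n_i/2$, hence $k=\deg G-g_i+1=n_i/2$ (exactly the dimension demanded in part~(ii)) and $d\ge n_i-\deg G=n_i/2-g_i+1$. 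For part~(i) I would instead fix $R\le 1/2$, choose $\deg G\approx Rn_i+g_i$ so that $2G\le D+(\eta)$ is arrangeable, and read off $k$ and $d$ as above.

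Next I would compute the asymptotics. Since $g_i/n_i\to 1/(l-1)$, the self-dual family yields $R=1/2$ and $\delta\ge 1/2-(g_i-1)/n_i\to 1/2-1/(l-1)$, matching part~(ii). For the self-orthogonal family, $R=k/n_i\to \deg G/n_i-1/(l-1)$ and $\delta\ge 1-\deg G/n_i$, so that $R+\delta\ge 1-1/(l-1)$ in the limit; equivalently, for any prescribed $R,\delta$ with $0\le R\le 1/2$ and $R=1-\delta-1/(l-1)$, the choice $\deg G\approx Rn_i+g_i$ gives $\lim k/n_i\ge R$ and $\lim d/n_i\ge\delta$, which is part~(i). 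The constraint $R\le 1/2$ is automatic from $\dim C\le\dim C^{\perp}$.

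The main obstacle is the construction of the differential $\eta_i$ itself: for self-duality I must produce a Weil differential whose pole divisor is exactly $D$ with all residues equal to $1$ \emph{and} whose zero divisor is even, i.e. $D+(\eta_i)=2G_i$ for an effective $G_i$ supported off $D$, since only then does the equality $2G=D+(\eta)$ admit a solution. I would obtain such a differential from the explicit arithmetic of the tower --- a differential of the form $df/f$, or one attached to the different, whose zeros occur with controlled, even multiplicities --- discarding the (vanishingly small) fraction of rational places that violate the parity or residue condition. Guaranteeing this parity/residue control at a set of rational places dense enough to keep $N(F_i)/g_i\to l-1$ is the delicate point; it is where generic Riemann--Roch estimates no longer suffice and one must exploit the specific structure of the tower, and it is the step I expect to consume most of the effort.
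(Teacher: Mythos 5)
First, a point of calibration: the paper does not prove this statement at all --- it is quoted verbatim, with attribution, as Theorem 1.6 of Stichtenoth's paper \cite{Henning:2005}, and is used as a black box to feed Theorem~\ref{main8}. So there is no internal proof to compare against; what you have written is an attempted reconstruction of Stichtenoth's own argument.

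As such a reconstruction, your reduction is the right one and follows the strategy of the cited source: realize the codes as $C_{\mathcal L}(D,G)$ on an asymptotically optimal tower over ${\mathbb F}_{l^2}$, invoke the duality formula $C_{\mathcal L}(D,G)^{\perp}=C_{\mathcal L}(D,\,D-G+(\eta))$ for a Weil differential $\eta$ with $v_{P_j}(\eta)=-1$ and $\operatorname{res}_{P_j}(\eta)=1$ at every place of $D$, so that self-duality becomes $2G=D+(\eta)$ and self-orthogonality becomes $2G\leq D+(\eta)$, and then let $g_i/n_i\to 1/(l-1)$ do the asymptotic work; the dimension count $k=n/2$ in part (ii) and the Tsfasman--Vladut--Zink line $R+\delta=1-1/(l-1)$ both drop out correctly. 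However, the step you defer --- producing, at every level of the tower, a differential $\eta_i$ whose pole divisor is exactly $D_i$, with all residues equal to $1$, and with $D_i+(\eta_i)=2G_i$ (or, for part (i), dominating $2G_i$ with $\deg G_i\approx Rn_i+g_i$, which already presupposes control of where the zeros of $\eta_i$ sit) --- is not a technical afterthought; it is the entire content of the theorem. Everything preceding it is the standard self-dual AG-code formalism valid on any curve, and the reason the statement is a theorem of \cite{Henning:2005} rather than folklore is precisely that this divisibility-and-residue condition must be verified compatibly with $N(F_i)/g_i\to l-1$. Stichtenoth achieves this by exploiting the Galois structure of the specific tower on which the codes of Theorems~\ref{henni} and~\ref{henni1} live, not by a generic Riemann--Roch estimate or by discarding bad places. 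As written, your proposal is a correct reduction together with an accurate diagnosis of the missing lemma, but it is not yet a proof.
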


\begin{theorem}\label{main8}
Let $q = l^2$ be a prime power, where $l \geq 3$ is an integer.
Consider the two sequences ${(C_{j})}_{j\geq 0}$ and
${(D_{j})}_{j\geq 0}$ of self-orthogonal and self-dual linear codes
respectively, over ${ \mathbb F}_{q}$ shown in Theorem~\ref{henni1}.
Then the following hold
\begin{itemize}
\item [ (i)] There exist two sequences of asymptotically good
convolutional codes ${(V_{j})}_{j\geq 0}$ and ${(W_{j})}_{j\geq 0}$,
derived respectively from ${(C_{j})}_{j\geq 0}$ and
${(D_{j})}_{j\geq 0}$.

\item [ (ii)] There exist sequences of asymptotically good
convolutional codes ${(V_{j}^{k})}_{j\geq 0}$ derived, respectively,
from ${(C_{j}^{k})}_{j\geq 0}$, $k=1, \ldots 6$, where the codes
$C_{j}^{1}, C_{j}^{2},$ $\ldots , C_{j}^{6}$ are obtained from
${(C_{j})}_{j\geq 0}$ by expanding, direct sum, the $\langle
u|u+v\rangle$ construction, extending, puncturing and from direct
product construction, respectively.

\item [ (iii)] There exist sequences of asymptotically good
convolutional codes ${(W_{j}^{k})}_{j\geq 0}$ derived, respectively,
from ${(D_{j}^{k})}_{j\geq 0}$, $k=1, \ldots 6$, where the codes
$D_{j}^{1}, D_{j}^{2},$ $\ldots , D_{j}^{6}$ are obtained from
${(D_{j})}_{j\geq 0}$ by expanding, direct sum, the $\langle
u|u+v\rangle$ construction, extending, puncturing and from direct
product construction, respectively.
\end{itemize}
\end{theorem}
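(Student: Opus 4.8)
The plan is to observe that none of the proofs of Theorems~\ref{main1}--\ref{main7} ever used the transitivity of the base codes: each argument relied only on the three asymptotic facts supplied by Theorem~\ref{henni}, namely that $n_j \to \infty$, that $\lim_{j \to \infty} k_j/n_j \geq R > 0$, and that $\lim_{j \to \infty} d_j/n_j \geq \delta > 0$. The self-orthogonal/self-dual structure of the codes in Theorem~\ref{henni1} plays no essential role in the goodness computations; only the parameter asymptotics matter. Since Theorem~\ref{henni1}(i) furnishes a self-orthogonal sequence ${(C_j)}_{j \geq 0}$ satisfying precisely these three properties (with $R > 0$ and $\delta > 0$), and Theorem~\ref{henni1}(ii) furnishes a self-dual sequence ${(D_j)}_{j \geq 0}$ with $n_j \to \infty$, $\lim_{j \to \infty} k_j/n_j = 1/2 > 0$ and $\lim_{j \to \infty} d_j/n_j \geq 1/2 - 1/(l-1)$, every construction carried out above transfers essentially verbatim once the transitive $C_j$ is replaced by the self-orthogonal $C_j$ or the self-dual $D_j$.

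For part (i) I would replay the proof of Theorem~\ref{main1} twice. Starting from ${(C_j)}_{j \geq 0}$, after passing to a subsequence with $k_j > 1$ I take a generator matrix $G_j$ of $C_j$ (equivalently, a parity check matrix of $C_j^{\perp}$) and form ${\mathbb G}_j(D) = G_j^{*} + {\tilde L}_j D$ exactly as before, producing unit-memory codes $V_j$ with parameters $(n_j, k_j - 1, 1; 1, {(d_f)}_j)_q$ and ${(d_f)}_j \geq d_j$ by Theorem~\ref{A}. Since $s_j = n_j - k_j + 2$ and $k_j > 1$ force $r_j = n_j$, the goodness of ${(V_j)}_{j \geq 0}$ follows from $\lim_{j \to \infty} (k_j - 1)/n_j > 0$ and $\lim_{j \to \infty} d_j/n_j > 0$. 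The identical recipe applied to ${(D_j)}_{j \geq 0}$ yields ${(W_j)}_{j \geq 0}$.

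For parts (ii) and (iii) I would apply in turn the proofs of Theorems~\ref{main2} (expanding), \ref{main3} (direct sum), \ref{main4} ($\langle u|u+v\rangle$), \ref{main5} (extending), \ref{main6} (puncturing) and \ref{main7} (direct product), first to the self-orthogonal sequence ${(C_j)}_{j \geq 0}$ to obtain the six families ${(V_j^{k})}_{j \geq 0}$, $k = 1, \ldots, 6$, and then to the self-dual sequence ${(D_j)}_{j \geq 0}$ to obtain the six families ${(W_j^{k})}_{j \geq 0}$. In each case the parameters of the intermediate block code (length, dimension, and a lower bound on the minimum distance) are read off exactly as in the corresponding transitive theorem, and the very same ratio estimates give $\limsup_{j \to \infty} k/n > 0$ and $\limsup_{j \to \infty} {(d_f)}/r > 0$.

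The only point requiring genuine attention is the self-dual case, where $\lim_{j \to \infty} k_j/n_j = 1/2$ causes no difficulty but the distance ratio is only guaranteed to satisfy $\lim_{j \to \infty} d_j/n_j \geq 1/2 - 1/(l-1)$, which is strictly positive exactly when $l \geq 4$ and degenerates to $0$ at $l = 3$. Hence I expect the main obstacle to be establishing strict positivity of the free-distance ratio for the self-dual families ${(W_j)}_{j \geq 0}$ and ${(W_j^{k})}_{j \geq 0}$: for $l \geq 4$ it is immediate, while the boundary value $l = 3$ must be argued separately (or excluded for the self-dual part). Every remaining verification is a mechanical repetition of the computations already performed in Theorems~\ref{main1}--\ref{main7}.
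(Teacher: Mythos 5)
Your proposal takes exactly the paper's approach: the paper's entire proof of this theorem is the single sentence that the argument is similar to those of Theorems~\ref{main1} to \ref{main7}, and your replaying of those constructions with the self-orthogonal and self-dual base sequences is precisely what is intended. Your caveat about the self-dual case at $l=3$ is a genuine point the paper silently ignores: Theorem~\ref{henni1}(ii) only guarantees ${\lim}_{j\rightarrow\infty} d_j/n_j \geq 1/2 - 1/(l-1)$, which equals $0$ when $l=3$, so positivity of the free-distance ratio for the families ${(W_j)}_{j\geq 0}$ and ${(W_j^k)}_{j\geq 0}$ is only immediate for $l\geq 4$, and the boundary case would need either a separate argument or an explicit exclusion.
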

\begin{proof}
The proof is similar to that proofs of Theorems~\ref{main1} to
\ref{main7}, respectively.
\end{proof}

Let us recall the definition of asymptotically good block codes:

\begin{definition}\label{asg}
A sequence ${(C_{j})}_{j\geq 0}$ of linear codes over ${ \mathbb
F}_{q}$ with parameters $[n_j ,$ $k_j , d_j {]}_{q}$ is said to be
asymptotically good if $n_j \rightarrow \infty$ as $j \rightarrow
\infty$ and if ${\limsup}_{j \rightarrow \infty}$ $k_{j}/n_{j}> 0 $
and ${\limsup}_{j \rightarrow \infty} d_{j}/n_{j} > 0$ hold.
\end{definition}

Theorem~\ref{gene}, given in the following, establishes that the
results presented in this paper are also true in a more general
scenario, when considering arbitrary families of asymptotically good
linear block codes in order to construct AGCC's:

\begin{theorem}\label{gene}
Let $q$ be a prime power. Assume that there exists a sequence
${(C_{j})}_{j\geq 0}$ of asymptotically good linear (block) codes
$C_j$, where both limits ${\lim}_{j \rightarrow \infty}$
$k_{j}/n_{j}$ and ${\lim}_{j \rightarrow \infty} d_j /n_{j}$ exist.
Then the following hold:
\begin{itemize}
\item [ (i)] There exists a sequence of asymptotically good
convolutional codes ${(V_{j})}_{j\geq 0}$ derived from
${(C_{j})}_{j\geq 0}$.

\item [ (ii)] There exist sequences of asymptotically good
convolutional codes ${(V_{j}^{k})}_{j\geq 0}$,  $k=1, \ldots 6$,
derived, respectively, from the sequences ${(C_{j}^{k})}_{j\geq 0}$
of linear codes, $k=1, \ldots 6$, where the codes $C_{j}^{1},
C_{j}^{2}, \ldots , C_{j}^{6}$ are obtained from ${(C_{j})}_{j\geq
0}$ by expanding, direct sum, the $\langle u|u+v\rangle$
construction, extending, puncturing and from direct product
construction, respectively.
\end{itemize}
\end{theorem}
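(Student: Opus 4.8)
The plan is to observe that the arguments proving Theorems~\ref{main1} through \ref{main7} never actually used the transitivity (or self-orthogonality/self-duality) of the underlying block codes: in each case the only properties invoked were that $n_j \to \infty$, that $\lim_{j} k_j/n_j > 0$, and that $\lim_{j} d_j/n_j > 0$. The algebraic structure served merely to exhibit one concrete family satisfying these asymptotic conditions. Accordingly, I would prove Theorem~\ref{gene} by replaying those constructions with the hypothesised sequence $(C_j)_{j\geq 0}$ in place of the transitive family, and by checking that each of the six code operations preserves both asymptotic goodness and the existence of the two limits.

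For part (i), write $\rho := \lim_j k_j/n_j > 0$ and $\delta := \lim_j d_j/n_j > 0$. Since $n_j \to \infty$ and $k_j/n_j \to \rho > 0$, the dimensions satisfy $k_j \to \infty$, so after passing to a subsequence we may assume $k_j > 1$ for all $j$ (exactly as in the first paragraph of the proof of Theorem~\ref{main1}). Let $G_j$ be a generator matrix of $C_j$, viewed as a parity-check matrix of $C_j^{\perp}$, and form the unit-memory matrix $\mathbb{G}_j(D) = G_j^{*} + \tilde{L}_j D$, where $G_j^{*}$ consists of the first $k_j - 1$ rows of $G_j$ and $\tilde{L}_j$ carries the last row atop $k_j - 2$ zero rows. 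By Theorem~\ref{A} this is a reduced basic generator matrix, so $V_j$ is non-catastrophic with parameters $(n_j, k_j - 1, 1; 1, (d_f)_j)_q$ and $(d_f)_j \geq d\bigl((C_j^{\perp})^{\perp}\bigr) = d_j$. Here $s_j = n_j - k_j + 2$ exactly as in Theorem~\ref{main1}, and because $k_j > 1$ we get $r_j = n_j$; hence $\lim_j (k_j - 1)/n_j = \rho > 0$ and $\lim_j (d_f)_j/r_j \geq \delta > 0$, so $(V_j)_{j\geq 0}$ is asymptotically good by Definition~\ref{defmain1}.

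For part (ii), I would verify operation by operation that the transformed block-code family is again asymptotically good with both limits existing, and then apply part (i) to it. Direct sum and the $\langle u | u+v\rangle$ construction send $[n_j, k_j, d_j]_q$ to $[2n_j, 2k_j, d_j]_q$, preserving the rate ratio and scaling the distance ratio by $1/2$; extension gives $[n_j + 1, k_j, d_j^{e}]_q$ with $d_j^{e} \geq d_j$; puncturing (after a further subsequence ensuring $d_j > 1$) gives $[n_j - 1, k_j, d_j^{p}]_q$ with $d_j^{p} \geq d_j - 1$; the direct product gives $[n_j^2, k_j^2, d_j^2]_q$, for which $\lim_j k_j^2/n_j^2 = \rho^2 > 0$ and $\lim_j d_j^2/n_j^2 = \delta^2 > 0$; and expansion with respect to a basis of $\mathbb{F}_{q^m}$ over $\mathbb{F}_q$ sends $[n_j, k_j, d_j]_{q^m}$ to $[mn_j, mk_j, d_j^{*}]_q$ with $d_j^{*} \geq d_j$, leaving the rate limit unchanged while scaling the distance limit by $1/m$. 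In every case the two limits still exist and remain positive, so part (i) applied to the transformed family produces the required sequences $(V_j^{k})_{j\geq 0}$, $k = 1, \ldots, 6$.

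The work is essentially bookkeeping, and the only points requiring care are the two subsequence extractions (to guarantee $k_j > 1$ throughout, and $d_j > 1$ before puncturing) and the observation that squaring in the product-code case keeps the limiting ratios positive. The one genuinely new hypothesis relative to Definition~\ref{asg} is that the limits themselves exist; I would emphasise that this is precisely what licenses the clean identities $\lim_j d_j^2/n_j^2 = (\lim_j d_j/n_j)^2$ and $\lim_j d_j^{*}/(mn_j) = (1/m)\lim_j d_j/n_j$, so that the positivity of the relevant $\limsup$ in Definition~\ref{defmain1} is immediate. No real obstacle arises, since the crucial free-distance bound $(d_f)_j \geq d_j$ furnished by Theorem~\ref{A} depends only on the minimum distance of the block code and not on any of the structural properties used in Theorems~\ref{henni} and \ref{henni1}.
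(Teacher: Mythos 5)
Your proposal is correct and follows exactly the route the paper intends: the paper's own proof of Theorem~\ref{gene} simply states that it is ``similar to the proofs of Theorems~\ref{main1} to \ref{main7}'', and your write-up fills in precisely that argument --- replaying the unit-memory construction of Theorem~\ref{main1} for part (i) and checking, operation by operation, that the six derived block-code families remain asymptotically good with existing limits. Nothing further is needed.
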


\begin{proof}
The proof is similar to that proofs of Theorems~\ref{main1} to
\ref{main7}, respectively.
\end{proof}


\section{Summary}\label{IV}

We have constructed new sequences of asymptotically good
convolutional codes derived from sequences of transitive,
self-orthogonal and self-dual block codes that attain the
Tsfasman-Vladut-Zink bound. Furthermore, more sequences of new
asymptotically good convolutional codes were obtained by applying
the techniques of expanding, extending, puncturing, direct sum, the
$\langle u | u + v\rangle$ construction and the product code
construction to these block codes. Additionally, we have shown that
our new constructions of AGCC's also hold when applied to all
sequences of asymptotically good block codes where ${\lim}_{j
\rightarrow \infty} k_{j}/n_{j}$ and ${\lim}_{j \rightarrow \infty}
d_j /n_{j}$ exist.

\section*{Acknowledgment}
I would like to thank the anonymous referee for his/her valuable
comments and suggestions that improve significantly the quality and
the presentation of this paper. This research has been partially
supported by the Brazilian Agencies CAPES and CNPq.

\small

\end{document}